\newtheorem{theorem}{Theorem}[section]
\newtheorem{proposition}[theorem]{Proposition}
\newtheorem{lemma}[theorem]{Lemma}
\newtheorem{corollary}[theorem]{Corollary}
\theoremstyle{definition}
\newtheorem{assumption}[theorem]{Assumption}
\newtheorem{remark}[theorem]{Remark}
\numberwithin{equation}{section}
\newcommand{\A}{\mathcal{A}}
\newcommand{\Sf}{\mathfrak{S}}
\newcommand{\B}{\mathcal{B}}
\newcommand{\Q}{\mathcal{Q}}
\newcommand{\eps}{\varepsilon}
\newcommand{\pd}{\partial}
\newcommand{\N}{\mathbb{N}}
\newcommand{\R}{\mathbb{R}}
\begin{document}

\title[Eigenvalue asymptotic of Robin Laplace operators]
{Eigenvalue asymptotic of Robin Laplace operators on two-dimensional
domains with cusps}

\author {Hynek Kova\v{r}\'{\i}k}

\address { Dipartimento di Matematica, Politecnico di Torino,  Corso Duca degli Abruzzi, 24, 10129 Torino, ITALY }

\email {hynek.kovarik@polito.it}


\date {\today}

\begin {abstract}
We consider Robin Laplace operators on a class of two-dimensional
domains with cusps. Our main results include the formula for the
asymptotic distribution of the eigenvalues of such operators. In
particular, we show how the eigenvalue asymptotic depends on the
geometry of the cusp and on the boundary conditions.
\end{abstract}

\maketitle

{\bf  AMS 2000 Mathematics Subject Classification:} 35P20, 35J20\\

{\bf  Keywords:}
Eigenvalue asymptotic, Laplace operators, Schr\"odinger operator \\


\section{Introduction}
Let $\Omega\subset\R^2$ be an open domain such that the spectrum of
the Dirichlet Laplacian $-\Delta_\Omega^D$ on $\Omega$ is discrete.
Denote by $N_\lambda(-\Delta_\Omega^D)$ the counting function of
$-\Delta_\Omega^D$, i.e. the number of eigenvalues of
$-\Delta_\Omega^D$ less than $\lambda$. The classical result by
H.~Weyl, \cite{we}, states that if $\Omega$ is bounded, then
\begin{equation} \label{weyl-cl}
N_\lambda(-\Delta_\Omega^D) \, = \, \frac{ \lambda}{4 \pi}\,
|\Omega| +o(\lambda) \qquad   \lambda\to\infty,
\end{equation}
where $|\Omega|$ denotes the volume of $\Omega$. The proof of
\eqref{weyl-cl} for unbounded domains with finite volume is due to M.~Birman, M.~Solomyak and
B.~Boyarski, see  e.g.~\cite{bs}. The situation is different for the Neumann Laplacian
$-\Delta_\Omega^N$. In this case equation \eqref{weyl-cl}, with
$N_\lambda(-\Delta_\Omega^N)$ in place of
$N_\lambda(-\Delta_\Omega^D)$, holds whenever $\Omega$ is bounded
and has sufficiently regular boundary, see e.g.~\cite{iv1,net,ns}
for the estimates on the rest term in \eqref{weyl-cl}. However, the
Neumann Laplacian might not satisfy \eqref{weyl-cl} (its spectrum
might even not be discrete) if $\Omega$ has rough boundary or if
$\Omega$ is unbounded, \cite{ber,ds,hss,jms,ns,sol1}.

Here we will focus on unbounded domains with regular boundary and we
will consider two-dimensional domains of the form
\begin{equation} \label{region-def}
\Omega = \{(x,y)\in\R^2\, : \, x>1,\, |y|<f(x)\}\, ,
\end{equation}
where $f:(1,\infty)\to\R$ is a positive function such that $f(x)\to
0$ as $x\to\infty$. Then the counting function
$N_\lambda(-\Delta_\Omega^D)$ of the Dirichlet Laplacian satisfies
\eqref{weyl-cl} as long as $f$ is integrable. If $f$ decays too
slowly, so that $|\Omega|=\infty$, then the spectrum of
$-\Delta_\Omega^D$ is still discrete, but
$N_\lambda(-\Delta_\Omega^D)$ grows super-linearly in $\lambda$, see
\cite{be,da,ro,si}. On the other hand, the spectrum of the Neumann
Laplacian $-\Delta_\Omega^N$ is discrete {\it if and only if}
\begin{equation} \label{compact-N}
\lim_{x\to\infty}\, \left( \int_1^x \frac{dt}{f(t)}\right) \left(
\int_x^\infty f(t)\, dt\right)=0.
\end{equation}
This remarkable fact was proved in \cite{eh}, see also \cite{mz}.
Asymptotic behaviour of $N_\lambda(-\Delta_\Omega^N)$ on domains of
this type was studied in \cite{be,ivrii,jms,net,sol1}. We would like
to point out that $f$ must decay faster than any power function for
\eqref{compact-N} to hold. We thus notice a huge difference between
the spectral properties of $-\Delta_\Omega^D$ and $-\Delta_\Omega^N$
on such domains.

Motivated by this discrepancy, we want to study the gap between
Dirichlet and Neumann Laplacians. To do so we consider a family of
Laplace operators on $\Omega$ which formally correspond
to the so-called Robin boundary conditions
\begin{equation} \label{robin-bc}
\frac{\pd u}{\pd n}\, (x,y) +h(x)\, u(x,y)=0, \qquad x>1, \, \, y=
\pm f(x),
\end{equation}
where $\frac{\pd u}{\pd n}$ denotes the normal derivative of $u$ and
$h:(1,\infty)\to\R_+$ is a sufficiently smooth bounded function. The
extreme cases $h\equiv 0$ and $h\equiv \infty$ correspond to Neumann
and Dirichlet Laplacians respectively. First question that arises is
under what conditions on $h$ and $f$ is the spectrum of the
associated Robin Laplacian discrete. Next we would like to know how
the coefficient $h(x)$ of the boundary conditions affects the
asymptotic distribution of eigenvalues of the Robin Laplacian.

The paper is organised as follows. In section \ref{asympt:main} we
formulate our main results, see Theorems \ref{principle} and
\ref{main-thm}. Similarly as in \cite{ber,jms}, we show that the
leading term of the eigenvalue asymptotic has two contributions, one
of which results from an auxiliary one-dimensional Schr\"odinger
operator. The boundary conditions affect the eigenvalue asymptotic
through the term $h(x)\sqrt{1+f'(x)^2}/f(x)$ which enters into the
potential of this operator, see equations \eqref {potentials} and
\eqref{H}. For some particular choices of $h$ and $f$ this
contribution can be calculated explicitly, the corresponding results
are given in section \ref{examples}.

The proofs of the main results are given in section \ref{proof}. Our
strategy is to treat separately the contribution to the counting
function from a finite part of $\Omega$ and from the tail. In
section \ref{step1} it is shown that the contribution from the
finite part satisfies the Weyl law \eqref{weyl-cl}. The key point of
the proof is to transform, in the remaining part of $\Omega$, the
problem to a Neumann Laplacian plus a positive potential that
reflects the boundary term, see section \ref{step 2}. To this end we
employ the technique known as ground state representation, which has
been recently used e.g. in \cite{fsw} to derive eigenvalue estimates
for Schr\"odinger operators with regular ground states, see also
\cite{fls}. Once this transformation is done, we show, by rather
standard arguments, that one part of the eigenvalue distribution of
such Neumann Laplacian with additional potential is asymptotically
(i.e. for $\lambda\to\infty$) equivalent to eigenvalue distribution
of a direct sum of certain one-dimensional Schr\"odinger operators,
see section \ref{step 3}. This enables us to prove Theorem
\ref{principle}. Finally, in the closing section \ref{gen} we
discuss some generalisations for Robin Laplacians with non symmetric
boundary conditions.


\section{Preliminaries and notation}
\label{prelim}

Given a self-adjoint operator $T$ with a purely discrete spectrum we
denote by $N_\lambda(T)$ the number of its eigenvalues, counted with
multiplicities, less than $\lambda$.  We will write $A \, \simeq \,
B$ if the operators $A$ and $B$ are unitarily equivalent and we will
use the notation
$$
f_1( \lambda) \sim f_2( \lambda) \quad \lambda\to\infty
\quad \Longleftrightarrow \quad
\lim_{ \lambda\to\infty} \frac{f_1( \lambda)}{f_2( \lambda) }=1.
$$
We will consider the eigenvalue behaviour of the Robin boundary value
problem in a weak sense. Therefore the main object of our interest
is the self-adjoint operator $A_\sigma$ in $L^2(\Omega)$ associated
with the closure of the quadratic form
\begin{equation} \label{q-form}
Q_\sigma[u]= \int_\Omega |\nabla u|^2\, dxdy + \int_1^\infty\!
\sigma(x)\left(|u(x,f(x))|^2+|u(x,-f(x))|^2\right)\, dx
\end{equation}
on $C^2_{0}(\bar\Omega)$. Here $C^2_{0}(\bar\Omega)$ denotes the
restriction to $\Omega$ of functions from $C^2(\R^2)$ such that for
each $y$ the support of $u(\cdot, y)$ is a compact subset of
$(1,\infty)$. The operator $A_\sigma$ formally corresponds to the
Laplace operator on $\Omega$ with Dirichlet boundary condition at
$\{x=1\}$ and mixed boundary conditions \eqref{robin-bc} at the rest
of the boundary, if we chose $\sigma$ such that
\begin{equation*}
\sigma(x) = h(x) \, \sqrt{1+f'(x)^2}\, .
\end{equation*}
\begin{remark}
Since we work under the assumption that $f'(x)\to 0$ as $x\to
\infty$, see below, and since the asymptotic of
$N_\lambda(A_\sigma)$ depends only on the behaviour of $\sigma$ at
infinity, from now on we will work with the function $\sigma$
instead of $h$.
\end{remark}

\noindent We will also need the following auxiliary potentials:
\begin{equation} \label{potentials}
V(x) = \frac 14\, \left(\frac{f'}{f}\right)^2+\frac 12\,
\left(\frac{f'}{f}\right)', \qquad W_\sigma(x)= V(x) +
\frac{\sigma(x)}{f(x)}.
\end{equation}

\noindent Throughout the whole paper we will suppose that $f$ satisfies

\begin{assumption} \label{ass-f1}
$f\in C^\infty(1,\infty)$ is positive and such that $f'(x)\leq 0$ for all $x$ large enough. Moreover,
\begin{equation} \label{f}
\lim_{x\to\infty}\, f(x) = \lim_{x\to\infty}\, f''(x) = 0.
\end{equation}
\end{assumption}

\noindent Note that \eqref{f} implies $f'(x)\to 0$ as $x\to\infty$, see Lemma \ref{landau} below.

\begin{lemma} \label{landau}
Let $f\in C^2(1,\infty)$ be a nonnegative function. Assume that $f$ and $|f''|$ are
bounded on $(1,\infty)$. For a given $x>1$ define $M_x= \sup_{s\geq x} f(s)$ and $M''_x=
\sup_{s\geq x} |f''(s)|$. Then
\begin{equation} \label{taylor-new}
(f'(x))^2 \leq 2\, M_x \, M''_x .
\end{equation}
\end{lemma}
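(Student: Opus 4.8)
The inequality to be proven, $(f'(x))^2 \le 2 M_x M''_x$, is a quantitative version of the classical Landau–Kolmogorov inequality bounding the first derivative by the function and its second derivative. The plan is to use Taylor's theorem with remainder on the interval $[x,\infty)$, exploiting only that both $f$ and $|f''|$ stay bounded there (by $M_x$ and $M''_x$ respectively). Concretely, for any $h>0$ I would write, by Taylor expansion around $x$,
\[
f(x+h) = f(x) + h f'(x) + \frac{h^2}{2} f''(\xi)
\]
for some $\xi\in(x,x+h)$. Solving for $f'(x)$ gives
\[
f'(x) = \frac{f(x+h)-f(x)}{h} - \frac{h}{2} f''(\xi).
\]

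Next I would estimate the right-hand side using the hypotheses: since $0 \le f \le M_x$ on $[x,\infty)$ we have $|f(x+h)-f(x)| \le M_x$ (here nonnegativity of $f$ is what gives the bound by $M_x$ rather than $2M_x$), and $|f''(\xi)| \le M''_x$. Hence
\[
|f'(x)| \le \frac{M_x}{h} + \frac{h}{2}\, M''_x .
\]
This holds for every $h>0$, so I would optimise over $h$: the right-hand side is minimised at $h = \sqrt{2 M_x / M''_x}$ (assuming $M''_x>0$; the degenerate case $M''_x = 0$ forces $f$ affine and bounded and nonnegative on a half-line, hence constant, so $f'(x)=0$ and the claim is trivial). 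Substituting this optimal $h$ yields
\[
|f'(x)| \le \sqrt{2 M_x M''_x},
\]
which squares to the asserted inequality \eqref{taylor-new}.

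The only mild subtlety — the part most worth a second thought — is the use of \emph{one-sided} information: the hypotheses control $f$ and $f''$ only on $[x,\infty)$, so the Taylor expansion must be taken with $h>0$, i.e. to the right of $x$, which is exactly what makes $M_x$ and $M''_x$ (suprema over $[x,\infty)$) the right quantities and keeps all evaluation points $x+h$ and $\xi$ inside the region where the bounds hold. Everything else is the standard optimisation, and there is no real obstacle; the constant $2$ is sharp for this one-sided statement. This is precisely the estimate needed to conclude from Assumption \ref{ass-f1} that $f'(x)\to 0$ as $x\to\infty$, since $M_x\to 0$ and $M''_x\to 0$ under \eqref{f}.
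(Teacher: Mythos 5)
Your proof is correct and follows essentially the same route as the paper's: Taylor expansion to the right of $x$, using nonnegativity of $f$ to bound $|f(x+h)-f(x)|\leq M_x$ rather than $2M_x$, then optimising over the step size $h$. You add a couple of details the paper leaves implicit (the explicit minimiser $h=\sqrt{2M_x/M''_x}$ and the degenerate case $M''_x=0$), but the argument is the same.
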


\begin{proof}
Let $s>x$. The Taylor expansion of $f$ at the point $x$ gives
\begin{equation} \label{tayl-2}
f(s)- f(x) = t f'(x) +\frac{t^2}{2}\, f''(y), \qquad y\in [x,s], \quad t= s-x.
\end{equation}
On the other hand, $f\geq 0$ ensures that $|f(x)-f(s)| \leq M_x$ for all
$s>x$. This together with \eqref{tayl-2} implies that the inequality
$$
|f'(x)| \leq \, \frac{M_x}{t}\, + \frac{t\, M_x''}{2}
$$
holds for all $t>0$. Optimization with respect to $t$ then gives the result.
\end{proof}

\begin{remark}
Note that if we leave out the assumption $f\geq 0$, then the above proof still works with the modification
that now $|f(x)-f(s)| \leq 2 M_x$. This results into the Landau inequality, i.e.~inequality \eqref{taylor-new}
with the factor $2$ on the right hand side replaced by $4$.
\end{remark}

\noindent
The hypothesis on $\sigma$ are the following:

\begin{assumption} \label{ass-h}
The function $\sigma\in C^2(1,\infty)$ is non negative. Moreover,
$\sigma, \, \sigma'$ and $\sigma''$ are bounded and
\begin{equation}  \label{compact-h}
\lim_{x\to\infty}\, W_\sigma(x)\, =\infty.
\end{equation}
\end{assumption}

\noindent In order to formulate our next assumption, we introduce the operator
\begin{equation} \label{H}
\mathcal{H}_\sigma= -\frac{d^2}{dx^2}\, + W_\sigma(x)
\qquad \text{in\, \, } L^2(1,\infty)
\end{equation}
with Dirichlet boundary condition at $x=1$. More precisely, $\mathcal{H}_\sigma$ is the
operator generated by the closure of the quadratic form
$$
\int_1^\infty\left(|\psi'|^2+W_\sigma\, \psi^2 \right)\,  dx,
\qquad \psi \in C_0^2(1,\infty).
$$
Alongside with $\mathcal{H}_\sigma$ we will also consider the auxiliary operator
\begin{equation}
\B = -\partial_x^2 -\frac{1}{f^2(x)}\, \, \partial_y^2 \qquad
\text{in\, \, } L^2((1,\infty)\times(-1,1))
\end{equation} with Dirichlet boundary conditions.

\begin{assumption} \label{number}
For $0< \eps <1 $ we have
\begin{align}
N_\lambda((1\pm\eps)\, \mathcal{H}_\sigma) & =
N_\lambda(\mathcal{H}_\sigma)(1+\mathcal{O}(\eps)), \label{eps} \\
N_\lambda((1\pm\eps)\, \B) & = N_\lambda(\B)(1+\mathcal{O}(\eps))
\label{eps-B}
\end{align}
\end{assumption}

\begin{remark}
A similar assumption was made in \cite{jms}. Although this
assumption is essential for the approach used in the proof of
Theorem \ref{principle} below, it is natural to believe that the
statement holds under more general conditions. Note also that for
domains with finite volume \eqref{eps-B} holds automatically.
\end{remark}


\section{Main results}

\label{asympt:main}

\begin{theorem} \label{disc}
If \ref{ass-f1} and \ref{ass-h} are satisfied, then the spectrum of
$A_\sigma$ is discrete.
\end{theorem}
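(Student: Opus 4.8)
The plan is to prove discreteness of the spectrum of $A_\sigma$ by establishing compactness of the embedding of the form domain of $Q_\sigma$ into $L^2(\Omega)$, or equivalently by showing that $(A_\sigma+1)^{-1}$ is compact. The natural route is to compare $Q_\sigma$ with a quadratic form whose associated operator is manifestly discrete. First I would introduce, on the straightened domain $(1,\infty)\times(-1,1)$, the change of variables $(x,y)\mapsto(x,y/f(x))$, which maps $\Omega$ onto the strip and transforms $Q_\sigma$ into a form whose principal part resembles $\B$ together with first-order terms coming from the $x$-derivative hitting $f(x)$; these extra terms are controlled by Assumption \ref{ass-f1}, since $f'/f$ stays bounded once combined with the potential $V$.

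The key step is the \emph{ground state substitution}: writing $u(x,y)=f(x)^{1/2}\,v(x,y)$ and integrating by parts, one exchanges the boundary term $\int_1^\infty\sigma(x)(|u(x,f(x))|^2+|u(x,-f(x))|^2)\,dx$ and the curvature of the cusp for the potential $W_\sigma(x)=V(x)+\sigma(x)/f(x)$ acting in the bulk. Concretely, after this transformation the form $Q_\sigma$ is bounded below (up to lower-order perturbations absorbed using boundedness of $\sigma,\sigma',\sigma''$ and $f,f'',$ hence of $f'$ via Lemma \ref{landau}) by a form of the type
\begin{equation*}
\int_{(1,\infty)\times(-1,1)}\Big(|\partial_x v|^2+\frac{1}{f(x)^2}|\partial_y v|^2\Big)\,dx\,dy+\int_{(1,\infty)\times(-1,1)} W_\sigma(x)\,|v|^2\,dx\,dy,
\end{equation*}
which is the form of $\B$ plus the multiplication operator $W_\sigma(x)$. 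Since $f(x)\to0$, the operator $\B$ already has discrete spectrum with eigenvalues tending to infinity (its $y$-modes have energies $\sim (k\pi/2f(x))^2\to\infty$), and the extra potential $W_\sigma(x)\to\infty$ by \eqref{compact-h} only helps; one can then invoke a standard criterion (e.g. that the resolvent of an operator which is $\geq$ an operator with compact resolvent, on the same form domain, is compact, or a direct Rellich-type argument: the region where $W_\sigma(x)+\pi^2/(4f(x)^2)\leq R$ is a bounded subset of $\Omega$ with regular boundary, on which the embedding is compact by the usual Rellich theorem, while outside this region the form dominates $R\|v\|^2$).

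The main obstacle is making the ground state substitution rigorous on the correct form core and verifying that the cross terms and the lower-order contributions of $\sigma$ and of $f'$ do not destroy the lower bound — i.e. checking that the error terms are form-bounded with relative bound less than one with respect to the leading part, uniformly as $x\to\infty$. This is where Assumption \ref{ass-f1} (boundedness of $f$ and $f''$, hence of $f'$) and Assumption \ref{ass-h} (boundedness of $\sigma,\sigma',\sigma''$ together with $W_\sigma\to\infty$) are used precisely. Once the lower bound by $c\,\B + W_\sigma - C$ is in place on a common form core, discreteness of $A_\sigma$ follows from discreteness of the comparison operator by the min-max principle.
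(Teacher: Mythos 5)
Your overall strategy matches the paper's: straighten the cusp, use a substitution to trade geometric/boundary data for a bulk potential, and compare with the manifestly discrete operator $\B + W_\sigma$. However, there is a genuine gap in the key step. The substitution $u = f(x)^{1/2} v$ is \emph{not} the ground state representation the paper uses; it is the Liouville--Jacobian factor attached to the straightening map $(x,y)\mapsto(x,y/f(x))$, and after it (together with an integration by parts in $x$) one recovers only the term $V(x)=\frac14(f'/f)^2+\frac12(f'/f)'$ in $W_\sigma$. It does \emph{not} convert the Robin boundary term into the bulk potential $\sigma(x)/f(x)$: in the straightened variables that term simply becomes $\int_n^\infty \sigma(x) f(x)\bigl(|v(x,1)|^2+|v(x,-1)|^2\bigr)\,dx$, still living on the boundary $t=\pm1$. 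Dropping it (which your ``lower bound'' implicitly does) throws away precisely the information that distinguishes $A_\sigma$ from the Neumann Laplacian, and the latter may fail to have discrete spectrum when $|\Omega|=\infty$; so the claimed lower bound by $\B + W_\sigma - C$ does not follow from $u=f^{1/2}v$ alone.

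What the paper actually does in Step~2 is a genuine ground state representation in the transverse Robin problem: one factors $\psi = v(x,y)\varphi(x,y)$ with $v(x,y)=\cos(\kappa(x)y)$ the principal eigenfunction of \eqref{robin}, with eigenvalue $\mu(x)$. After integration by parts \emph{in $y$}, the Robin boundary term cancels exactly against the boundary contribution of $v$, and the identity \eqref{gr-state} replaces the boundary term by the bulk potential $\mu(x)v^2|\varphi|^2$; then Lemma \ref{implicit} ($\mu(x)f(x)/\sigma(x)\to1$, $v\to1$, $\partial_x v\to 0$) is what justifies replacing $\mu(x)$ by $\sigma(x)/f(x)$ up to an arbitrarily small relative error for $x$ large. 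This asymptotic character also explains why the paper first splits off a bounded piece $\Omega_n$ via Dirichlet--Neumann bracketing (your write-up omits this): the approximations $v\approx1$, $\mu\approx\sigma/f$ hold only at infinity, so one cannot work globally on $\Omega$. The straightening transformation $U$ with its $\sqrt{f}$ factor is then applied \emph{afterwards} to the already boundary-free form $q_n$, producing the remaining $V(x)$ contribution and reducing the problem to the direct sum \eqref{ort-sum} of one-dimensional Schr\"odinger operators, each of which has compact resolvent because $W_\sigma + k^2\pi^2/(4f^2)\to\infty$, with ground state energies going to infinity in $k$ since $f^2 W_\sigma\to 0$. To repair your argument you would need to insert the Robin ground state factorization (the paper's $v$ from \eqref{robin} and the identity \eqref{gr-state}) before the $\sqrt{f}$ change of variables, and restrict to the tail $E_n$ for $n$ large so that the error estimates of Lemma~\ref{implicit} apply.
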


\begin{remark}
Contrary to the case of Neumann Laplacian, the spectrum of
$A_\sigma$ can be discrete also if the volume of $\Omega$ is
infinite. For example if $\sigma$ is constant, then
\eqref{compact-h} is automatically satisfied in view of the fact
that $f(x) V(x)\to 0$ as $x\to\infty$, see equation \eqref{taylor}.
On the other hand, condition \eqref{compact-h} is, unlike
\eqref{compact-N}, only sufficient.
\end{remark}

\begin{theorem} \label{principle}
Suppose that assumptions \ref{ass-f1},\, \ref{ass-h} and
\ref{number} are satisfied. Then
\begin{equation} \label{principle-eq}
N_\lambda(A_\sigma) \, \sim \, N_\lambda(-\Delta_\Omega^D) +
N_\lambda(\mathcal{H}_\sigma) \qquad \lambda\to\infty.
\end{equation}
\end{theorem}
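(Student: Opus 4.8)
The plan is to localize the eigenvalue counting problem to a finite part of $\Omega$ and to the cusp, and to show that the two contributions in \eqref{principle-eq} come from these two pieces respectively. Fix a large parameter $R>1$ and split $\Omega$ into $\Omega_R = \Omega\cap\{x<R\}$ and $\Omega^R = \Omega\cap\{x>R\}$ by introducing Neumann or Dirichlet conditions on the segment $\{x=R\}\cap\Omega$; the Dirichlet--Neumann bracketing inequality then sandwiches $N_\lambda(A_\sigma)$ between $N_\lambda(A_{\sigma,R}^D) + N_\lambda(A_{\sigma}^{R,D})$ and $N_\lambda(A_{\sigma,R}^N) + N_\lambda(A_{\sigma}^{R,N})$, where the first operator in each pair acts on the bounded piece $\Omega_R$ and the second on the cusp $\Omega^R$. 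On the bounded piece $\Omega_R$ the boundary term $\int\sigma(|u|^2+\cdots)$ is a relatively form-bounded perturbation of order $o(1)$ in the relative sense (since $\sigma$ is bounded and the trace is controlled), so by standard perturbation results for the counting function (as invoked for the finite part in section \ref{step1}) one gets $N_\lambda(A_{\sigma,R}^{D/N}) = \frac{\lambda}{4\pi}|\Omega_R| + o(\lambda)$; letting $R\to\infty$ after $\lambda\to\infty$ this accounts for the term $N_\lambda(-\Delta_\Omega^D) \sim \frac{\lambda}{4\pi}|\Omega|$ when $|\Omega|<\infty$, and more generally matches $N_\lambda(-\Delta_\Omega^D)$ by the same localization applied to the Dirichlet Laplacian itself.

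\textbf{The cusp contribution.} On the cusp $\Omega^R$ the main step is the ground state representation: writing $u = f(x)^{-1/2} v$ (the factor $f^{-1/2}$ being, up to normalization, the ground state of the transverse problem), the quadratic form $Q_\sigma$ restricted to $\Omega^R$ is transformed, after an integration by parts that produces exactly the potential $V(x)$ in \eqref{potentials} from the $x$-derivative of $f^{-1/2}$ and the potential $\sigma/f$ from the boundary term, into a form of the type $\int (|\partial_x v|^2 + f^{-2}|\partial_y v|^2 + W_\sigma(x)|v|^2)$ with Neumann conditions on the lateral boundary; this is the operator that combines $\B$ (rescaled to $\Omega^R$) with the potential $W_\sigma$. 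One then separates variables in $y$: the transverse operator $-f(x)^{-2}\partial_y^2$ with Neumann conditions has lowest eigenvalue $0$ (constant mode) with a spectral gap of order $f(x)^{-2}\to\infty$; projecting onto the zero mode gives exactly $\mathcal H_\sigma$ restricted to $(R,\infty)$, while the orthogonal complement has bottom of the spectrum tending to $+\infty$ and contributes the term $N_\lambda(\B)$ (the non-zero transverse modes being the high-energy part of $\B$). Using Assumption \ref{number} — specifically \eqref{eps} and \eqref{eps-B} to absorb the errors from the rescaling of the cross-section $(-f(x),f(x))$ to $(-1,1)$ and from the cross-terms in the separation of variables — and noting that $N_\lambda(\B) = N_\lambda(-\Delta^D_\Omega) - \frac{\lambda}{4\pi}|\Omega_R| + o(\lambda)$ so that the $\B$ part recombines with the finite-part estimate into the full $N_\lambda(-\Delta_\Omega^D)$, one arrives at $N_\lambda(A_\sigma^{R,\cdot}) \sim N_\lambda(\mathcal H_\sigma) + (\text{tail of } N_\lambda(-\Delta^D_\Omega))$.

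\textbf{Assembling.} Combining the two pieces, the Dirichlet and Neumann brackets both converge, as first $\lambda\to\infty$ and then $R\to\infty$, to $N_\lambda(-\Delta_\Omega^D) + N_\lambda(\mathcal H_\sigma)$; the point where $R$ and $\lambda$ must be sent to infinity in a coordinated way (choosing $\eps = \eps(R)\to 0$ and controlling that the error terms $\mathcal O(\eps) N_\lambda(\mathcal H_\sigma)$ and $\mathcal O(\eps)N_\lambda(\B)$ remain lower order after division by $N_\lambda(-\Delta_\Omega^D)+N_\lambda(\mathcal H_\sigma)$) is where Assumption \ref{number} is used in its full strength.

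\textbf{Main obstacle.} The hard part will be the separation of variables on the cusp in a form that is clean enough for the counting function: the cross term between $\partial_x v$ and the $x$-dependence of the transverse modes does not vanish because the width $f(x)$ varies, so one must show that this coupling is a small (relative-form-bounded with small bound, uniformly in the relevant energy window) perturbation. This is precisely where $f'(x)\to 0$ (Lemma \ref{landau}) and $f''(x)\to 0$ enter, guaranteeing that $V(x) = o(f(x)^{-2})$ — see the estimate \eqref{taylor} alluded to in the excerpt — so that the non-zero transverse modes really do stay at energy $\gg W_\sigma$ and decouple asymptotically. Making the ``asymptotically'' here quantitative, uniformly in $R$, and then feeding it through Assumption \ref{number}, is the technical heart of the argument.
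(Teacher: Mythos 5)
Your overall strategy matches the paper's: Dirichlet--Neumann bracketing to isolate a finite Weyl part and the cusp, a ground-state representation on the cusp, straightening of the cross-section, and separation of transverse Neumann modes so that the zero mode gives $\mathcal H_\sigma$ while the $k\geq 1$ modes give $\B$ (using $W_\sigma f^2\to 0$), with Assumption \ref{number} absorbing the $(1\pm\eps)$ factors. The gap is in the key transformation on the cusp. You write $u=f^{-1/2}v$ and claim this converts the Robin boundary term into the bulk potential $\sigma/f$ with Neumann lateral boundary conditions; this is false. The factor $f^{-1/2}$ is merely the Jacobian of the map straightening the cusp to a fixed rectangle (it is what produces $V(x)$, cf.\ the paper's Step~3), and being $y$-independent it cannot absorb the boundary trace term: after the substitution, the term $\int\sigma\bigl(|u(x,f(x))|^2+|u(x,-f(x))|^2\bigr)\,dx$ simply becomes $\int\frac{\sigma}{f}\bigl(|v|^2|_{y=f(x)}+|v|^2|_{y=-f(x)}\bigr)\,dx$ --- still a boundary integral, with Robin (not Neumann) conditions inherited by $v$.

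To eliminate the boundary term one must factor out the genuine transverse Robin ground state $v(x,y)=\cos(\kappa(x)y)$, where $\kappa^2=\mu(x)$ solves $\kappa\tan(\kappa f(x))=\sigma(x)$. Because this $v$ satisfies the Robin conditions \eqref{robin}, integrating by parts in $y$ cancels the boundary contribution exactly and produces the bulk potential $\mu(x)$, with the representative $\varphi$ now naturally subject to Neumann lateral conditions (the paper's Step~2, \eqref{factor-N}--\eqref{gr-state}). One must then prove $\mu f/\sigma\to 1$, $v\to 1$, $\partial_x v\to 0$ uniformly in $y$ (Lemma \ref{implicit}) to replace $\mu$ by $\sigma/f$ and pass from the weighted $L^2(v^2\,dxdy)$ to $L^2(dxdy)$ up to $(1\pm\eps)$. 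Only after that is the $\sqrt f$ rescaling applied, producing $V(x)$. So the paper performs two distinct transformations that you have merged into one, and the single substitution you propose does not in fact yield the Neumann-plus-$W_\sigma$ operator you go on to analyze. Once this is repaired, the remainder of your outline is consistent with the paper.
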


\begin{remark}
The second term in \eqref{principle-eq} is a contribution from the
eigenvalues of the operator $A_\sigma$ restricted to the space of
functions which depend only on $x$. This is analogous to the case of
Neumann Laplacian, \cite{ds,jms, sol1}. On the other hand, the
presence of the boundary term $\sigma(x)$ enables us to apply
\eqref{principle-eq} also in the situation in which the Neumann
Laplacian does not have purely discrete spectrum.
\end{remark}

\begin{remark} \label{general}
Theorem \ref{principle} allows a straightforward generalisation to
Robin Laplacians with different boundary conditions on the upper and
lower boundary of $\Omega$, say given through functions
$\sigma_1(x)$ and $\sigma_2(x)$. In that case we only have to
replace $\sigma(x)$ in \eqref{potentials} by
$(\sigma_1(x)+\sigma_2(x))/2$, see section \ref{non-sym} for
details.
\end{remark}

\noindent For domains with finite volume Theorem \ref{principle} and
the Weyl formula \eqref{weyl-cl} give

\begin{theorem} \label{main-thm}
Let $|\Omega| < \infty$ and suppose that assumptions \ref{ass-f1},\,
\ref{ass-h} and \eqref{eps} are satisfied. Then
\begin{equation} \label{main}
N_\lambda(A_\sigma) \, \sim \, \frac {\lambda}{4\pi}\, |\Omega| +
N_\lambda(\mathcal{H}_\sigma) \qquad  \lambda\to\infty.
\end{equation}
\end{theorem}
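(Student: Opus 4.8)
The plan is to deduce Theorem \ref{main-thm} directly from Theorem \ref{principle} together with the classical Weyl law. Note first that the hypotheses of Theorem \ref{main-thm} are precisely those of Theorem \ref{principle}, except that Assumption \ref{number} is weakened to just its first part \eqref{eps}. So the first step is to check that when $|\Omega|<\infty$ the second condition \eqref{eps-B} of Assumption \ref{number} holds automatically, as already noted in the remark following that assumption. Indeed, $\B$ is, up to the change of variables $y\mapsto f(x)y$, unitarily equivalent to the Dirichlet Laplacian on a bounded-volume domain, and scaling $\B$ by $(1\pm\eps)$ merely scales $\lambda$; since $N_\lambda(\B)$ obeys the two-term-free Weyl asymptotic $N_\lambda(\B)\sim c\lambda$ with $c=|\Omega|/(4\pi)$ (finite volume), one has $N_{\lambda/(1\pm\eps)}(\B)=(1\pm\eps)^{-1}N_\lambda(\B)(1+o(1))=N_\lambda(\B)(1+\mathcal O(\eps))$, which is \eqref{eps-B}. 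Hence all the assumptions of Theorem \ref{principle} are in force.

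With Theorem \ref{principle} available, we have
\begin{equation*}
N_\lambda(A_\sigma)\,\sim\, N_\lambda(-\Delta_\Omega^D)+N_\lambda(\mathcal H_\sigma)\qquad \lambda\to\infty .
\end{equation*}
The next step is to invoke the Weyl law for the Dirichlet Laplacian. Since $|\Omega|<\infty$, the result of Birman--Solomyak--Boyarski cited in the introduction (equation \eqref{weyl-cl}, valid for unbounded domains of finite volume) gives
\begin{equation*}
N_\lambda(-\Delta_\Omega^D)=\frac{\lambda}{4\pi}\,|\Omega|+o(\lambda),\qquad \lambda\to\infty .
\end{equation*}
Substituting this into the asymptotic equivalence above yields \eqref{main}, provided we can absorb the $o(\lambda)$ error and the asymptotic-equivalence error into the claimed relation. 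The only subtlety is bookkeeping: $N_\lambda(\mathcal H_\sigma)$ grows and must not be dominated in a way that breaks the $\sim$; but since \eqref{principle-eq} is an honest asymptotic equivalence of $N_\lambda(A_\sigma)$ with the \emph{sum} $N_\lambda(-\Delta_\Omega^D)+N_\lambda(\mathcal H_\sigma)$, and replacing $N_\lambda(-\Delta_\Omega^D)$ by $\frac{\lambda}{4\pi}|\Omega|$ changes that sum only by $o(\lambda)=o\!\left(N_\lambda(-\Delta_\Omega^D)\right)$, hence by a relatively negligible amount compared with the whole sum, the equivalence is preserved. This gives \eqref{main} and completes the proof.

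The step I expect to require the most care is the verification that \eqref{eps-B} is automatic in the finite-volume case — specifically making precise the claim that $N_\lambda(\B)$ satisfies a one-term Weyl asymptotic, so that the scaling behaviour \eqref{eps-B} follows. This is where one uses $|\Omega|<\infty$ in an essential way and where one must be careful that $\B$ is genuinely the relevant Dirichlet operator on a finite-volume domain after the coordinate change; everything else is a routine combination of Theorem \ref{principle} and the Weyl law. An alternative, slightly cleaner route is to observe that $\B\simeq -\Delta^D_\Omega$ after the change of variables, so that $N_\lambda(\B)=N_\lambda(-\Delta_\Omega^D)\sim \frac{\lambda}{4\pi}|\Omega|$ and \eqref{eps-B} is immediate; I would present it this way to keep the argument short.
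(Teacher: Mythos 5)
Your proposal is correct and takes essentially the same route as the paper: Theorem~\ref{main-thm} is recorded there as an immediate consequence of Theorem~\ref{principle} together with the Weyl formula \eqref{weyl-cl} for $-\Delta_\Omega^D$, and the remark after Assumption~\ref{number} already notes that \eqref{eps-B} holds automatically when $|\Omega|<\infty$, so only \eqref{eps} needs to be assumed.

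One small imprecision is worth flagging. The assertion that $\B$ is unitarily equivalent to $-\Delta_\Omega^D$ under the change of variables $y\mapsto f(x)y$ is not correct: that change of variables produces the cross term $-2t\frac{f'}{f}\partial_x u\,\partial_t u$ and additional potential contributions (this is exactly the computation that appears in the formula for $\widehat{Q}_n$ in Step~3), so $\B$ is a simplified model operator obtained only after those terms are estimated away, not a unitary conjugate of the Dirichlet Laplacian. What you actually need, and what is true, is merely the one-term Weyl asymptotic
\begin{equation*}
N_\lambda(\B)\,\sim\,\frac{\lambda}{2\pi}\int_1^\infty f(x)\,dx\,=\,\frac{\lambda\,|\Omega|}{4\pi}\qquad (\lambda\to\infty),
\end{equation*}
which holds by \cite[Thm.~1.2.1]{sv} (this is the content of Lemma~\ref{B-op} in the finite-volume case). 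Given that linear growth, your scaling argument $N_\lambda((1\pm\eps)\B)=N_{\lambda/(1\pm\eps)}(\B)=N_\lambda(\B)(1+\mathcal{O}(\eps))$ does establish \eqref{eps-B}, and the rest of the bookkeeping with the $o(\lambda)$ error in the Weyl law is exactly right. So I would drop the unitary-equivalence justification and replace it by the reference to Lemma~\ref{B-op}; the proof is otherwise complete.
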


\begin{remark}
Note that if $\sigma\equiv 0$, then the condition $|\Omega|<\infty$
is necessary for the spectrum of $A_0=-\Delta_\Omega^N$ to be
discrete, see \eqref{compact-N}. Hence in that case there is no
difference between Theorems \ref{principle} and \ref{main-thm} and
the resulting eigenvalue asymptotic agrees with the one obtained in
\cite{ber, jms}.
\end{remark}

\begin{corollary} \label{robin-w}
Let $|\Omega|<\infty$ and let $\sigma(x) = \sigma$ be constant.
Assume that $f$ satisfies \ref{ass-f1}. Then
\begin{align} \label{robin-weyl}
\limsup_{x\to\infty}\, x^2 f(x)\, = 0 & \quad \Longrightarrow \quad
N_\lambda(A_\sigma)\, \sim\,  \frac{|\Omega|}{4\pi}\, \,  \lambda  & \lambda\to\infty  \\
 \label{robin-linear}
\lim_{x\to\infty} x^2 f(x)\, = a^2  & \quad \Longrightarrow \quad
N_\lambda(A_\sigma)\, \sim \left(\frac{|\Omega|}{4\pi}+
\frac{|a|}{4\sqrt{\sigma}}\, \right)  \lambda  & \lambda\to\infty.
\end{align}
\end{corollary}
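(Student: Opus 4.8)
The plan is to deduce the corollary from Theorem~\ref{main-thm} by computing the counting function of the one-dimensional operator $\mathcal{H}_\sigma$. First I would check that the hypotheses of Theorem~\ref{main-thm} hold for a positive constant $\sigma$. Assumption~\ref{ass-h} is immediate: $\sigma,\sigma',\sigma''$ are bounded, and since $V=\tfrac14(f'/f)^2+\tfrac12(f'/f)'$ satisfies
\[
f(x)\,V(x)\;=\;\tfrac12\,f''(x)-\tfrac14\,\frac{f'(x)^2}{f(x)},
\]
the facts $f''(x)\to0$ and $f'(x)^2\le 2 f(x)\sup_{s\ge x}|f''(s)|$ (Lemma~\ref{landau}, using that $f'\le0$ eventually, hence $\sup_{s\ge x}f(s)=f(x)$ for $x$ large) give $f(x)V(x)\to0$; consequently $W_\sigma(x)=V(x)+\sigma/f(x)=(1+o(1))\,\sigma/f(x)\to+\infty$, i.e.\ \eqref{compact-h}. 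Condition \eqref{eps-B} is automatic because $|\Omega|<\infty$, and \eqref{eps} will be verified along the way. Granting this, Theorem~\ref{main-thm} gives $N_\lambda(A_\sigma)\sim\frac{\lambda}{4\pi}|\Omega|+N_\lambda(\mathcal{H}_\sigma)$ as $\lambda\to\infty$, so the corollary reduces to the asymptotics of $N_\lambda(\mathcal{H}_\sigma)$.

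Next I would invoke the classical one-dimensional Weyl law: since $W_\sigma\in C^2$ is bounded below and tends to $+\infty$, the counting function of $\mathcal{H}_\sigma=-d^2/dx^2+W_\sigma$ on $(1,\infty)$ satisfies
\[
N_\lambda(\mathcal{H}_\sigma)\;\sim\;\frac1\pi\int_1^\infty\bigl(\lambda-W_\sigma(x)\bigr)_+^{1/2}\,dx .
\]
One obtains this by Dirichlet--Neumann bracketing at a fixed point $x_0$: the finite interval $(1,x_0)$ carries a bounded potential and contributes only $O(\sqrt\lambda)=o(\lambda)$, while on $(x_0,\infty)$ one uses $W_\sigma=(1+o(1))\sigma/f$ to squeeze $\mathcal{H}_\sigma$ between $-d^2/dx^2+(1\pm\eta)\,\sigma/f(x)$ and lets $\eta\downarrow0$. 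In particular the leading term of $N_\lambda(\mathcal{H}_\sigma)$ is governed only by the behaviour of $\sigma/f$ at infinity.

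It then remains to evaluate the phase-space integral in the two regimes. If $x^2f(x)\to a^2$, then $\sigma/f(x)\sim\sigma x^2/a^2$, hence $W_\sigma(x)\sim\sigma x^2/a^2$, and after the substitution $x=|a|\sqrt{\lambda/\sigma}\,t$,
\[
\int_1^\infty\bigl(\lambda-W_\sigma(x)\bigr)_+^{1/2}\,dx\;\sim\;\int_0^\infty\Bigl(\lambda-\frac{\sigma x^2}{a^2}\Bigr)_+^{1/2}dx\;=\;\frac{|a|\,\lambda}{\sqrt\sigma}\int_0^1\sqrt{1-t^2}\,dt\;=\;\frac{\pi\,|a|}{4\sqrt\sigma}\,\lambda ,
\]
so $N_\lambda(\mathcal{H}_\sigma)\sim\frac{|a|}{4\sqrt\sigma}\,\lambda$; the same computation with $\sigma$ replaced by $(1\pm\eps)\sigma$ gives $N_\lambda((1\pm\eps)\mathcal{H}_\sigma)\sim\frac{|a|}{4\sqrt{(1\pm\eps)\sigma}}\,\lambda=N_\lambda(\mathcal{H}_\sigma)(1+\mathcal{O}(\eps))$, i.e.\ \eqref{eps}. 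Combining with Theorem~\ref{main-thm} yields \eqref{robin-linear}. If instead $\limsup_{x\to\infty}x^2f(x)=0$, fix $\eps>0$: for $x$ large we have $f(x)<\eps/x^2$ and $W_\sigma(x)\ge\tfrac12\sigma/f(x)$, so $\{x>x_0:W_\sigma(x)<\lambda\}\subseteq\bigl(x_0,\,\sqrt{2\eps\lambda/\sigma}\,\bigr)$, whence
\[
\int_1^\infty\bigl(\lambda-W_\sigma(x)\bigr)_+^{1/2}\,dx\;\le\;\sqrt\lambda\,x_0+\sqrt\lambda\cdot\sqrt{2\eps\lambda/\sigma}\, .
\]
Dividing by $\lambda$ and letting first $\lambda\to\infty$ and then $\eps\to0$ gives $N_\lambda(\mathcal{H}_\sigma)=o(\lambda)$; applying the same bound to $(1\pm\eps)\mathcal{H}_\sigma$ gives \eqref{eps}, and Theorem~\ref{main-thm} then yields $N_\lambda(A_\sigma)\sim\frac{|\Omega|}{4\pi}\lambda$, i.e.\ \eqref{robin-weyl}.

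The step I expect to be the main obstacle is the rigorous justification of the one-dimensional Weyl law for $\mathcal{H}_\sigma$, together with the verification of \eqref{eps}, under Assumption~\ref{ass-f1} alone: since $f$ (hence $W_\sigma\sim\sigma/f$) is only required to be eventually monotone with $f,f''\to0$, the potential may grow faster than any polynomial and need not be regularly varying, so a priori $N_\lambda(\mathcal{H}_\sigma)$ could have large relative jumps. In the quadratic regime \eqref{robin-linear} this difficulty disappears because $W_\sigma$ is asymptotically a multiple of $x^2$; in the regime \eqref{robin-weyl} the crude inclusion above already pins $N_\lambda(\mathcal{H}_\sigma)$ down to $o(\lambda)$ uniformly, so no fine regularity is needed — but making the intermediate bracketing estimates fully quantitative is where the technical care lies.
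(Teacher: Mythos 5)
Your overall strategy matches the paper's: verify Assumption~\ref{ass-h}, reduce to the asymptotics of $N_\lambda(\mathcal{H}_\sigma)$, and compare $W_\sigma$ with $x^2$-type potentials. Two points, however, are genuine gaps.

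First, you invoke ``the classical one-dimensional Weyl law'' $N_\lambda(\mathcal{H}_\sigma)\sim\frac1\pi\int(\lambda-W_\sigma)_+^{1/2}\,dx$ under the mere hypotheses $W_\sigma\in C^2$, bounded below, $\to\infty$. That is not a theorem: the phase-space asymptotics for a one-dimensional Schr\"odinger operator requires some regularity of the potential at infinity (monotonicity plus $x^3 q'\to\infty$, or convexity --- the paper's Theorems~\ref{tit} and~\ref{tit-2}), and Assumption~\ref{ass-f1} does not give you this for $W_\sigma$ itself. The paper sidesteps the issue by never asserting a Weyl law for $W_\sigma$: it uses Lemma~\ref{enough} only for the \emph{explicit} comparison potentials $x^2/\eps^2$ and $\sigma a^{-2}x^2$ (which do satisfy Titchmarsh's criteria), and then transfers information to $\mathcal{H}_\sigma$ by the variational principle after the pointwise squeeze $W_\sigma\gtrless(1\pm\eta)\sigma/f$. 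Your final computations in fact do exactly this harmonic-oscillator comparison, so the Weyl-law assertion is an unjustified intermediate step rather than a fatal error --- but as written it is not proved, and in the regime of~\eqref{robin-weyl} you are comparing integrals where you need to compare counting functions.

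Second, and more seriously, in the case~\eqref{robin-weyl} you claim that ``applying the same bound to $(1\pm\eps)\mathcal{H}_\sigma$ gives~\eqref{eps}'' and then apply Theorem~\ref{main-thm}. This does not follow. The estimate you derive is $N_\lambda((1\pm\eps)\mathcal{H}_\sigma)=o(\lambda)$ for each fixed $\eps$, whereas~\eqref{eps} is the \emph{multiplicative} statement $N_\lambda((1\pm\eps)\mathcal{H}_\sigma)=N_\lambda(\mathcal{H}_\sigma)\bigl(1+\mathcal{O}(\eps)\bigr)$, which requires controlling the ratio $N_{\lambda/(1\mp\eps)}(\mathcal{H}_\sigma)/N_\lambda(\mathcal{H}_\sigma)$ uniformly in $\lambda$; knowing that both sides are $o(\lambda)$ gives no such control (if $N_\lambda(\mathcal{H}_\sigma)$ grows slowly and irregularly, the ratio can be large). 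This is precisely why the paper's proof of~\eqref{robin-weyl} does \emph{not} apply Theorem~\ref{main-thm} --- it instead goes back to the intermediate two-sided bounds~\eqref{two-sided}, \eqref{bounds}, \eqref{up} from the proof of Theorem~\ref{principle}, where the only input needed is $\limsup_\lambda\lambda^{-1}N_\lambda((1-\eps)\mathcal{H}_\sigma)=\mathcal{O}(\eps)$ (an absolute smallness relative to the Weyl term, not a relative accuracy). Your argument for~\eqref{robin-linear} is fine, since there the sharp asymptotics $N_\lambda(\mathcal{H}_\sigma)\sim\frac{|a|}{4\sqrt\sigma}\lambda$ does legitimately yield~\eqref{eps}. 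To repair~\eqref{robin-weyl}, drop the claim that~\eqref{eps} holds and instead feed your $o(\lambda)$ bound directly into the sandwich estimates~\eqref{almost-}--\eqref{almost+}, as the paper does.
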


\begin{remark}
\noindent Equation \eqref{robin-weyl} provides a sufficient
condition on the decay of $f$ for the Weyl's law in the case of
constant $\sigma$. Notice that the borderline decay behaviour is $f(x) \sim x^{-2}$
which is in contrast to $f(x) \sim x^{-1}$ in the case of Dirichlet Laplacian.
The reason behind this is that the principle eigenvalues
of Robin and Dirichlet Laplacians on an interval of the width $2f(x)$
scale in a different way as $f(x)\to 0$. Observe also that \eqref{robin-linear} turns
into \eqref{robin-weyl} when $\sigma\to\infty$, as expected.
\end{remark}
If the volume of $\Omega$ is infinite, then we confine ourselves to
situations when $f$ is a power function. The asymptotic distribution
of the Dirichlet-Laplacian on such region is known, see \cite{ro},
\cite{si}. These results together with Theorem \ref{principle} yield

\begin{corollary} \label{vol-infty}
Let $f(x)=x^{-\alpha},\, 0<\alpha\leq 1$. If $\mathcal{H}_\sigma$
satisfies \eqref{eps}, then as $ \lambda\to\infty$ we have
\begin{eqnarray*}
N_\lambda(A_\sigma)  & \sim &   \frac{1}{\pi}\,
\left(\frac{2}{\pi}\right)^{\frac{1}{\alpha}}\zeta\left(\frac{1}
{\alpha}\right)\, B\left(1+\frac{1}{2\alpha},\, \frac 12\right)\,
\lambda^{\frac
12+\frac{1}{2\alpha}} + N_\lambda(\mathcal{H}_\sigma) \quad \alpha <1, \\
N_\lambda(A_\sigma)  & \sim &  \frac{1}{\pi}\, \,  \lambda \log
\lambda + N_\lambda(\mathcal{H}_\sigma) \quad \, \, \alpha =1,
\end{eqnarray*}
where $\zeta(\cdot)$ and $B(\cdot\, , \cdot)$ denote the Riemann
zeta and the Euler beta function respectively.
\end{corollary}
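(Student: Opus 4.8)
The plan is to apply Theorem \ref{principle} and substitute into it the known spectral asymptotics of the Dirichlet Laplacian on a horn-shaped region. The first step is to check that $f(x)=x^{-\alpha}$ meets all the hypotheses needed: it is smooth and positive on $(1,\infty)$, one has $f'(x)=-\alpha x^{-\alpha-1}\le 0$, and $f(x)\to 0$, $f''(x)=\alpha(\alpha+1)\,x^{-\alpha-2}\to 0$ as $x\to\infty$, so Assumption \ref{ass-f1} holds; the standing Assumption \ref{ass-h} on $\sigma$ is in force, and \eqref{eps} is assumed in the statement (and, via Theorem \ref{disc}, $A_\sigma$ has discrete spectrum). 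Hence the only hypothesis of Theorem \ref{principle} still to be verified is \eqref{eps-B}, which, as the remark after Assumption \ref{number} notes, is not automatic once $|\Omega|=\infty$.

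To establish \eqref{eps-B} I would separate variables in the transverse direction. Using the Dirichlet eigenfunctions $\sin\!\big(\tfrac{k\pi}{2}(y+1)\big)$, $k\ge 1$, of $-\partial_y^2$ on $(-1,1)$, one has
\[
\B\ \simeq\ \bigoplus_{k\ge 1}\Big({-\partial_x^2}+\big(\tfrac{k\pi}{2}\big)^2 x^{2\alpha}\Big)\qquad\text{in }L^2(1,\infty),
\]
with Dirichlet condition at $x=1$. Each summand is a one-dimensional Schr\"odinger operator with a confining polynomial potential, whose counting function obeys the semiclassical law $N_\lambda\big({-\partial_x^2}+c^2 x^{2\alpha}\big)\sim\frac1\pi\int_1^\infty(\lambda-c^2x^{2\alpha})_+^{1/2}\,dx$. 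Summing over $k$ — the series in $k$ converges when $\alpha<1$, whereas for $\alpha=1$ only the modes with $k\lesssim\sqrt\lambda$ contribute and the truncated series produces a logarithm — gives $N_\lambda(\B)\sim C\,\lambda^{\frac12+\frac1{2\alpha}}$ for $\alpha<1$ and $N_\lambda(\B)\sim c\,\lambda\log\lambda$ for $\alpha=1$. Once such a precise power-law (resp.\ $\lambda\log\lambda$) asymptotic is available, \eqref{eps-B} follows from the exact identity $N_\lambda((1\pm\eps)\B)=N_{\lambda/(1\pm\eps)}(\B)$ together with $(1\pm\eps)^{-\beta}=1+\mathcal O(\eps)$ for fixed $\beta$ (and $\tfrac{\lambda}{1\pm\eps}\log\tfrac{\lambda}{1\pm\eps}\sim\tfrac{\lambda}{1\pm\eps}\log\lambda$ in the borderline case $\alpha=1$). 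Alternatively one may quote that $N_\lambda(\B)\sim N_\lambda(-\Delta_\Omega^D)$, which is obtained in the course of proving Theorem \ref{principle}, and use the asymptotics recalled next.

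With all assumptions in force, Theorem \ref{principle} yields $N_\lambda(A_\sigma)\sim N_\lambda(-\Delta_\Omega^D)+N_\lambda(\mathcal H_\sigma)$ as $\lambda\to\infty$, and it remains only to insert the asymptotic eigenvalue distribution of $-\Delta_\Omega^D$ on $\Omega=\{x>1,\ |y|<x^{-\alpha}\}$, which is classical: by \cite{ro,si},
\[
N_\lambda(-\Delta_\Omega^D)\sim\frac1\pi\Big(\frac2\pi\Big)^{1/\alpha}\zeta\Big(\frac1\alpha\Big)B\Big(1+\frac1{2\alpha},\frac12\Big)\lambda^{\frac12+\frac1{2\alpha}}\quad(\alpha<1),\qquad N_\lambda(-\Delta_\Omega^D)\sim\frac1\pi\lambda\log\lambda\quad(\alpha=1).
\]
The separation of variables above reproduces exactly this structure: the exponent $\frac12+\frac1{2\alpha}$, the series $\sum_k(k\pi/2)^{-1/\alpha}=(2/\pi)^{1/\alpha}\zeta(1/\alpha)$ over transverse modes, and — after the change $u\mapsto u^{2\alpha}$ — a Beta function coming from $\int_0^1(1-u^{2\alpha})^{1/2}\,du$. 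Substituting the two displays into the conclusion of Theorem \ref{principle} gives the claimed formulas. The only genuinely delicate point is the verification of \eqref{eps-B}, i.e.\ ensuring Assumption \ref{number} is not vacuous in the infinite-volume regime; everything else is bookkeeping, and I expect no serious obstacle.
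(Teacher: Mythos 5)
Your proof is correct and follows the same approach as the paper: apply Theorem \ref{principle} and substitute the classical asymptotics of $N_\lambda(-\Delta_\Omega^D)$ from \cite{ro,si}. You also correctly notice that \eqref{eps-B} must be separately verified in the infinite-volume regime — a point the corollary's hypothesis (which lists only \eqref{eps}) leaves implicit — and your scaling argument $N_\lambda((1\pm\eps)\B)=N_{\lambda/(1\pm\eps)}(\B)$, combined with the power-law (resp.\ $\lambda\log\lambda$) behaviour of $N_\lambda(\B)$, fills that gap cleanly.
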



\subsection{Examples}
\label{examples}

We give the asymptotic of $N_\lambda(A_\sigma)$ for some concrete
choices of $f$ and $\sigma$.

\subsubsection{$f(x) = x^{-\alpha},\, \alpha >1, \, \,  \sigma(x) = \sigma=
\mbox{const}$.} Here
$$
W_\sigma(x) = \left(\frac{\alpha^2}{4}\, + \frac{\alpha}{2}\right)
x^{-2} + \sigma x^\alpha
$$
is convex and increasing at infinity so that assumption \ref{number}
is satisfied, see \cite[Chap. 7]{ti}. Theorem \ref{main-thm} in
combination with Theorem \ref{tit}, see Section \ref{auxiliary},
gives
\begin{equation} \label{h-const}
N_\lambda(A_\sigma) \, \sim \,  \frac{|\Omega|}{4 \pi}\,  \lambda\,
+\, \frac{1}{\alpha\pi}\, \sigma^{-\frac{1}{\alpha}}\,
B\left(\frac{1}{\alpha}\, , \, \frac 32\right)\,  \lambda^{\frac
12+\frac{1}{\alpha}}, \qquad  \lambda\to\infty.
\end{equation}
Note that, in agreement with Corollary \ref{robin-w},
$N_\lambda(A_\sigma)$ obeys Weyl's law as long as $\alpha >2$ and
for $\alpha=2$ the order of $ \lambda$ is linear, but the
coefficient is different from the one in the Weyl asymptotic. When
$\alpha < 2$, then the behaviour of $N_\lambda(A_\sigma)$ for $
\lambda\to\infty$ is fully determined by the second term on the
right hand side of \eqref{h-const}.

\subsubsection{$f(x) = x^{-\alpha},\, 0<\alpha \leq 1, \, \,
\sigma(x) =  \sigma\, x^{-\beta}$.}
Assumptions \ref{ass-h} is satisfied if and only if $0 \leq \beta <
\alpha. $ For these values of $\beta$ Corollary \ref{vol-infty} and
Theorem \ref{tit} give
$$
N_\lambda(A_\sigma)\,  \sim\,
\frac{\sigma^{-\frac{1}{\alpha-\beta}}}{(\alpha-\beta)\pi}\, \,
B\left(\frac{1}{\alpha-\beta}\, , \, \frac 32\right)\,
\lambda^{\frac 12+\frac{1}{\alpha-\beta}}, \qquad  \lambda\to
\infty.
$$


\section{Auxiliary material}
\label{auxiliary}

\noindent In this section we collect some auxiliary material, which
will be used in the proof of the main results. First we fix some
necessary notation. Given a continuous function $q:(1,\infty)\to \R$
such that $q(x)\to \infty$ as $x\to\infty$, we denote by
$T^{D,D}_{(a,b)}$ the operator in $L^2(a,b)$ acting as
$$
T^{D,D}_{(a,b)} = -\frac{d^2}{dx^2} +q(x), \qquad 1\leq a < b <
\infty
$$
with Dirichlet boundary conditions at $x=a$ and $x=b$. Operators
$T^{D,N}_{(a,b)}, \, T^{N,N}_{(a,b)}$ and $T^{N,D}_{(a,b)}$ are
defined accordingly. For $b=\infty$ we use the simplified notation
$T^{D}_{(a,\infty)}$ etc. to indicate the corresponding boundary
condition at $x=a$. It is well known that imposing Dirichlet
boundary condition at $x=a$ is a rank one perturbation. Variational
principle thus implies that
\begin{equation} \label{rank1}
0\leq
N_\lambda\big(T^{N}_{(a,\infty)}\big)-N_\lambda\big(T^{D}_{(a,\infty)}\big)\leq
1\quad \forall\, a.
\end{equation}

\begin{lemma} \label{1-dim}
Suppose that $q(x)$ is a continuous function such that $q(x)\to
\infty$ as $x\to\infty$. Then for any $s >1$ it holds
\begin{equation} \label{equiv}
N_\lambda \big( T^{N}_{(1,\infty)} \big) \, \sim \,
N_\lambda\big( T^D_{(1,\infty)} \big)\, \sim\, N_\lambda\big(
T^{D}_{(s,\infty)} \big) \, \sim \, N_\lambda\big(
T^{N}_{(s,\infty)} \big) \qquad \lambda\to\infty.
\end{equation}
\end{lemma}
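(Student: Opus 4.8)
The plan is to reduce everything to one basic mechanism: imposing a Dirichlet condition at an interior point is a rank-one perturbation, and cutting off a bounded left piece of the half-line changes the eigenvalue count only by a bounded amount. First I would invoke \eqref{rank1} twice, once at $x=1$ and once at $x=s$, to get
\[
0\leq N_\lambda\big(T^{N}_{(1,\infty)}\big)-N_\lambda\big(T^{D}_{(1,\infty)}\big)\leq 1,
\qquad
0\leq N_\lambda\big(T^{N}_{(s,\infty)}\big)-N_\lambda\big(T^{D}_{(s,\infty)}\big)\leq 1 .
\]
So the first and last $\sim$ in \eqref{equiv} follow from the middle ones, provided we know that $N_\lambda(T^{D}_{(1,\infty)})\to\infty$ as $\lambda\to\infty$; and this is exactly where $q(x)\to\infty$ is used, since then $T^{D}_{(1,\infty)}$ has purely discrete spectrum (its form domain embeds compactly, because $q$ is bounded below and grows), hence infinitely many eigenvalues below any sufficiently large $\lambda$.

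The heart of the matter is the comparison of $T^{D}_{(1,\infty)}$ with $T^{D}_{(s,\infty)}$. Here I would use bracketing: decoupling the interval $(1,\infty)$ at the point $s$ by imposing either Neumann or Dirichlet conditions there gives the two-sided bound
\[
N_\lambda\big(T^{D,D}_{(1,s)}\big)+N_\lambda\big(T^{D}_{(s,\infty)}\big)
\;\leq\;
N_\lambda\big(T^{D}_{(1,\infty)}\big)
\;\leq\;
N_\lambda\big(T^{N,N}_{(1,s)}\big)+N_\lambda\big(T^{N}_{(s,\infty)}\big).
\]
On the bounded interval $(1,s)$ the potential $q$ is continuous hence bounded, so $N_\lambda(T^{D,D}_{(1,s)})$ and $N_\lambda(T^{N,N}_{(1,s)})$ are both $\tfrac{s-1}{\pi}\sqrt{\lambda}+O(1)$; in any case they are $O(\sqrt\lambda)$. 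Combining this with \eqref{rank1} at $x=s$, the inequalities read
\[
N_\lambda\big(T^{D}_{(s,\infty)}\big)+O(\sqrt\lambda)\ \leq\ N_\lambda\big(T^{D}_{(1,\infty)}\big)\ \leq\ N_\lambda\big(T^{D}_{(s,\infty)}\big)+O(\sqrt\lambda).
\]
Thus $N_\lambda(T^{D}_{(1,\infty)})=N_\lambda(T^{D}_{(s,\infty)})+O(\sqrt\lambda)$, and the two are asymptotically equivalent as soon as $N_\lambda(T^{D}_{(s,\infty)})/\sqrt\lambda\to\infty$ as $\lambda\to\infty$.

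The main obstacle, then, is precisely this last lower bound: one must show that the counting function of the one-dimensional Schr\"odinger operator on the half-line grows strictly faster than $\sqrt\lambda$. This is where $q(x)\to\infty$ enters in an essential (not just qualitative) way. I would argue it by a test-function/Dirichlet-bracketing argument from below: for large $R$ split $(s,\infty)$ at $R$ and use $N_\lambda(T^{D,D}_{(s,R)})\geq \frac{R-s}{\pi}\sqrt{\lambda-\sup_{(s,R)}q}-1$; since $q\to\infty$, for any fixed large $\lambda$ we may take $R=R(\lambda)\to\infty$ slowly enough that $\sup_{(s,R)}q\leq \lambda/2$, say, giving $N_\lambda(T^{D}_{(s,\infty)})\geq c\,R(\lambda)\sqrt\lambda$ with $R(\lambda)\to\infty$, hence $N_\lambda(T^{D}_{(s,\infty)})\gg\sqrt\lambda$. (Equivalently: the operator differs from the finite-interval ones precisely because the well widens without bound, so it cannot have a Weyl-type $\sqrt\lambda$ count.) Feeding this back into the two displayed two-sided bounds closes all three equivalences in \eqref{equiv}, and the lemma follows.
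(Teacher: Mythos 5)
Your proof is correct and follows essentially the same route as the paper: reduce to the Dirichlet case via the rank-one perturbation \eqref{rank1}, use Dirichlet--Neumann bracketing at $x=s$ to isolate an $O(\sqrt\lambda)$ finite-interval contribution, and then show that $N_\lambda(T^D_{(s,\infty)})$ grows strictly faster than $\sqrt\lambda$ by exploiting $q(x)\to\infty$. The only cosmetic difference is that the paper phrases the super-$\sqrt\lambda$ lower bound in terms of the sublevel set $I_\lambda=\{x>s:q(x)<\lambda/2\}$ and its measure $|I_\lambda|\to\infty$, whereas you choose an expanding window $R(\lambda)\to\infty$ on which $\sup q\le\lambda/2$; these are the same estimate in two notations.
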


\begin{proof}
In view of \eqref{rank1} it suffices to consider the Dirichlet
operator only. Let $I_ \lambda:=\{x>s:\, q(x) <  \lambda/2\}$. Then
$$
N_\lambda\big(T^{D}_{(s,\infty)}\big) \, \geq\, N_{\frac{
\lambda}{2}}\big(-\frac{d^2}{dx^2}\big)^{Dir}_{L^2(I_ \lambda)}
\, \geq \frac{\sqrt{ \lambda}}{\pi\sqrt{2}}\, |I_ \lambda|\,
(1+o(1))\quad  \lambda\to\infty,
$$
where the superscript $Dir$ indicates Dirichlet boundary conditions
at the end points of $I_\lambda$. Since $|I_ \lambda|\to\infty$ as $
\lambda\to\infty$, this shows that $\liminf_{ \lambda\to\infty}\,  \lambda^{-1/2}\,
N_\lambda (T^{D}_{(s,\infty)} ) = \infty$. In view of the equation
$$
N_\lambda\big( T^{D,N}_{(1,s)}\big)\, \sim \, N_\lambda\big(
T^{D,D}_{(1,s)}\big) = \, \mathcal{O}(\sqrt{ \lambda}) \, \qquad
 \lambda\to\infty \quad \forall\, s>1,
$$
the result follows from the Dirichlet-Neumann bracketing (by putting additional boundary conditions at $x=s$), see e.g. \cite[Chap.13]{rs}.
\end{proof}

\noindent Under certain additional assumptions one can recover the
eigenvalue distribution of such operators from the potential $q$.
The following theorems are due to \cite[Chap. 7]{ti}:

\begin{theorem}[Titchmarsh] \label{tit}
Suppose that $q(x)$ is continuous increasing unbounded function,
that $q'(x)$ is continuous and $ x^3 q'(x) \to \infty$ as
$x\to\infty$. Then
\begin{equation} \label{cl}
N_\lambda\left(T^{D}_{(s,\infty)}\right) \, \sim \, \frac{1}{\pi}\,
\int_{s}^\infty \left( \lambda-q(x)\right)^{\frac 12}_+\, dx,\qquad
\lambda\to\infty.
\end{equation}
\end{theorem}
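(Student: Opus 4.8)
The plan is to prove this classical Titchmarsh-type Weyl asymptotic for the one-dimensional Schr\"odinger operator $T^D_{(s,\infty)}=-d^2/dx^2+q(x)$ by Dirichlet--Neumann bracketing, reducing the half-line problem to a family of problems on finite intervals where one can compare directly with the semiclassical count. First I would fix $\lambda$ and partition the half-line $(s,\infty)$ into successive intervals $(a_{k-1},a_k)$ on which the oscillation of $q$ is small relative to $\lambda$; since $q$ is increasing and continuous, only finitely many such intervals meet the classically allowed region $\{q<\lambda\}$, and the tail contributes nothing to $N_\lambda$ because $q\to\infty$. Imposing additional Neumann (resp.\ Dirichlet) conditions at the partition points $a_k$ gives the two-sided bound
\begin{equation*}
\sum_k N_\lambda\big(T^{D,D}_{(a_{k-1},a_k)}\big)\ \le\ N_\lambda\big(T^D_{(s,\infty)}\big)\ \le\ \sum_k N_\lambda\big(T^{N,N}_{(a_{k-1},a_k)}\big),
\end{equation*}
and on each finite interval the difference between the Dirichlet--Dirichlet and Neumann--Neumann counts is at most $2$ by the rank-one perturbation principle used already in \eqref{rank1}.

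Next I would estimate $N_\lambda$ on a single short interval $(a_{k-1},a_k)$ by sandwiching $q$ between its minimum and maximum there: with $m_k=\min q$, $M_k=\max q$ on that interval, monotonicity of eigenvalues in the potential gives $N_\lambda$ of the constant-potential operators as bounds, and for a constant potential $c<\lambda$ the Dirichlet (resp.\ Neumann) count on an interval of length $\ell$ equals $\lfloor \ell\sqrt{\lambda-c}/\pi\rfloor$ up to an additive $1$. Summing over $k$, both the upper and lower Riemann-type sums converge, as the mesh is refined, to $\frac1\pi\int_s^\infty(\lambda-q(x))_+^{1/2}\,dx$; the accumulated $O(1)$-per-interval errors are controlled because the number of relevant intervals, while growing with $\lambda$, is $o$ of the main term. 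This last point is exactly where the hypothesis $x^3q'(x)\to\infty$ enters: it forces $q$ to grow fast enough that the allowed region $\{q<\lambda\}$ has length only $o(\int_s^\infty(\lambda-q)_+^{1/2})$, equivalently that the main term $\frac1\pi\int_s^\infty(\lambda-q)_+^{1/2}\,dx$ tends to infinity strictly faster than any quantity of order $(\text{length of }\{q<\lambda\})$, so that a bounded error on each of boundedly-many-per-unit-length subintervals is negligible.

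Concretely, I would choose the partition adapted to $q$ itself rather than to a fixed mesh: for instance let $a_k$ be defined by $q(a_k)=q(s)+k\eta$ for a small $\eta>0$, so that the oscillation of $q$ on each interval is exactly $\eta$ and the number of intervals meeting $\{q<\lambda\}$ is $\lceil(\lambda-q(s))/\eta\rceil$. On the $k$-th interval one then gets $N_\lambda$ between $\frac{\ell_k}{\pi}(\lambda-q(a_k))_+^{1/2}$ and $\frac{\ell_k}{\pi}(\lambda-q(a_{k-1}))_+^{1/2}+1$ where $\ell_k=a_k-a_{k-1}$; since $(\lambda-q(a_{k-1}))^{1/2}-(\lambda-q(a_k))^{1/2}\le \eta/(2(\lambda-q(a_{k-1}))^{1/2})$ on the bulk and the single-interval near the turning point is handled separately, the relative error between the sum and the integral is $O(\eta)$ plus the additive count-of-intervals error. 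Letting $\lambda\to\infty$ first and then $\eta\to 0$, using the growth hypothesis to kill the additive error, yields \eqref{cl}. The main obstacle is precisely this bookkeeping: showing that the number of partition intervals times the $O(1)$ endpoint error is $o\!\left(\int_s^\infty(\lambda-q)_+^{1/2}\right)$, which is where one must invoke $x^3q'(x)\to\infty$ to lower-bound the integral; the rest is the standard variational comparison with piecewise-constant potentials. Since this is a verbatim classical result, I would in fact simply cite \cite[Chap.~7]{ti} for the details, as the author does.
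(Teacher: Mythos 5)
The paper does not prove this statement; it quotes it verbatim from Titchmarsh \cite[Chap.~7]{ti}, so there is no in-paper argument to compare against. Your Dirichlet--Neumann bracketing plan is a legitimate alternative route (Titchmarsh's own proof is ODE-theoretic, via the Liouville transformation and counting zeros of solutions, not variational), and is worth discussing on its merits.

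As written, though, the sketch has a genuine quantitative gap in the choice of partition. With $q(a_k)=q(s)+k\eta$ the number of intervals meeting $\{q<\lambda\}$ is about $\lambda/\eta$, so the accumulated $O(1)$-per-interval endpoint error is $O(\lambda/\eta)$. Your ``let $\lambda\to\infty$ first, then $\eta\to 0$'' scheme requires $\lambda/\eta=o\bigl(\int_s^\infty(\lambda-q)_+^{1/2}\,dx\bigr)$ at fixed $\eta$, which fails as soon as $q$ grows faster than quadratically: for $q(x)=x^\alpha$ the main term is of order $\lambda^{1/2+1/\alpha}$, which is $o(\lambda)$ when $\alpha>2$. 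A fixed mesh in $x$ (intervals of constant length $\ell$, independent of $\lambda$) is the right choice here: since $q$ is monotone, the upper-minus-lower Riemann-sum discrepancy telescopes to $\ell\sqrt{(\lambda-q(s))_+}$, which is $o$ of the main term, and the number of intervals is proportional to $|\{q<\lambda\}|$, not to $\lambda$. Your interpretation of the hypothesis $x^3q'(x)\to\infty$ is also not quite what it does. For monotone unbounded $q$ one always has $\int_s^\infty(\lambda-q)_+^{1/2}\,dx \geq (x_{\lambda/2}-s)\sqrt{\lambda/2}$, so ``length of $\{q<\lambda\}$ is $o$ of the main term'' is essentially automatic. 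What the condition actually excludes is a potential with very long nearly-flat stretches (so $q'$ decaying like $x^{-3}$ or faster on some region), which would let $|\{q<\lambda\}|$ outgrow $|\{q<\lambda/2\}|$ by more than the available factor of $\sqrt{\lambda}$ and ruin the count-of-intervals estimate; Theorem~\ref{tit-2} (convexity) is the alternative sufficient condition that serves the same purpose. Once these two points are fixed the plan goes through, but since the paper simply cites Titchmarsh you are also entitled, as you note, to do the same.
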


\begin{theorem}[Titchmarsh] \label{tit-2}
Suppose that $q(x)$ is continuous increasing and convex at infinity.
Then \eqref{cl} holds true.
\end{theorem}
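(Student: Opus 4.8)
The statement to prove is Theorem \ref{tit-2} (Titchmarsh): if $q$ is continuous, increasing and convex at infinity, then the Weyl-type asymptotic \eqref{cl} holds for $N_\lambda(T^D_{(s,\infty)})$. The natural strategy is to \emph{reduce Theorem \ref{tit-2} to Theorem \ref{tit}} by approximation, using the convexity of $q$ to get good control on the difference quotients of $q$. First I would observe that, by Lemma \ref{1-dim}, it suffices to prove the asymptotic for one choice of $s$; in particular we may take $s$ large enough that $q$ is increasing and convex on $(s,\infty)$.

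**Step 1: piecewise-linear minorants and majorants.** Fix $\lambda$ large and partition the effective interval $\{x>s:\,q(x)<\lambda\}$ (which is a bounded interval $(s,b_\lambda)$ since $q\to\infty$) into finitely many subintervals. On each subinterval replace $q$ by the chord connecting its endpoint values (this gives a majorant $q^+$ of $q$ by convexity) or by the tangent line at the left endpoint (this gives a minorant $q^-$). Each of $q^\pm$ is piecewise linear, hence on each piece it is a linear — in particular smooth, increasing, and satisfying $x^3(q^\pm)'(x)\to\infty$ — potential, so Theorem \ref{tit} applies on each piece after inserting Dirichlet conditions at the partition points (Dirichlet--Neumann bracketing, as in Lemma \ref{1-dim}, costs only $O(1)$ per bracketing point, and the number of points will be $o(\sqrt\lambda)$, indeed $o$ of the main term). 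Summing the single-piece asymptotics and then recombining, one gets
\begin{equation*}
\frac1\pi\int_{s}^\infty(\lambda-q^+(x))_+^{1/2}\,dx \;\lesssim\; N_\lambda(T^D_{(s,\infty)}) \;\lesssim\; \frac1\pi\int_{s}^\infty(\lambda-q^-(x))_+^{1/2}\,dx
\end{equation*}
up to the bracketing error, where both integrals are Riemann-type approximations of $\frac1\pi\int(\lambda-q)_+^{1/2}$.

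**Step 2: controlling the phase-space volume error.** The remaining task is to show that $\int(\lambda-q^\pm)_+^{1/2}=\int(\lambda-q)_+^{1/2}\,(1+o(1))$ as $\lambda\to\infty$, provided the partition is chosen appropriately (e.g. refining with $\lambda$). Since $q$ is convex, $0\le q(x)-q^-(x)$ and $0\le q^+(x)-q(x)$ are both bounded on each piece by the oscillation of $q'$ across that piece times the piece length; choosing, say, pieces on which $q$ increases by a fixed small fraction $\eta\lambda$ of $\lambda$ makes the uniform error $q^\pm(x)-q(x)=O(\eta\lambda)$ on the relevant range, whence $|(\lambda-q^\pm)_+^{1/2}-(\lambda-q)_+^{1/2}|$ is controlled and integrates to an $O(\sqrt\eta)$ relative error. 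Letting $\eta\to0$ slowly (and checking the number of pieces stays $o(\sqrt\lambda)$, which holds since the integral $\frac1\pi\int(\lambda-q)_+^{1/2}\to\infty$ faster than $\sqrt\lambda$ — this last point being exactly what Lemma \ref{1-dim}'s argument shows when $q$ is slowly growing, and when $q$ grows fast the piece count is anyway small) yields \eqref{cl}.

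**The main obstacle.** The delicate point is the case where $q$ grows \emph{fast}, so that the effective interval $(s,b_\lambda)$ is short and the main term $\frac1\pi\int(\lambda-q)_+^{1/2}$ is comparable to $\sqrt\lambda$ rather than much larger; then the $O(1)$-per-bracketing cost is not automatically negligible and one must be careful that finitely many (in fact boundedly many) pieces suffice, using convexity to bound how much $(\lambda-q)_+^{1/2}$ can concentrate. Dually, when $q$ grows very slowly one must verify the piece count is $o$ of the main term. In both regimes the tension is between refining the partition enough to make the phase-space error small and coarsening it enough to keep the bracketing error negligible; convexity is exactly the hypothesis that lets a single scale of partition handle this, and making that quantitative is the crux. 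The rest is the bookkeeping already rehearsed in the proof of Lemma \ref{1-dim}.
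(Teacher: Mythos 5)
The paper gives no proof of Theorem~\ref{tit-2}: it is attributed to Titchmarsh and cited from \cite[Chap.~7]{ti}. Titchmarsh's own argument there is a direct phase-function (Pr\"ufer) analysis of the ODE, not a bracketing-and-approximation scheme, so your route is genuinely different from the source (the paper itself has no proof to compare against).

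As a plan your proposal is sound, but it has two gaps, the second substantive. The minor one: Theorem~\ref{tit} concerns $T^D_{(s,\infty)}$ on a half-line with $q\to\infty$; after inserting bracketing conditions only the last piece is semi-infinite, and on the bounded pieces with piecewise-linear potential you are really using the elementary bounded-interval formula $N_\lambda=\frac{1}{\pi}\int(\lambda-q)_+^{1/2}+\mathcal{O}(1)$ rather than Theorem~\ref{tit}. The real gap is in your Step~2, where you claim a relative error $\mathcal{O}(\sqrt\eta)$ but do not justify it; this is exactly where convexity must enter, and you flag it as ``the crux'' without filling it. Concretely, from $0\le q-q^-\le\eta\lambda$ one only gets a pointwise error at most $\sqrt{\eta\lambda}$, hence an absolute error bounded by $\sqrt{\eta\lambda}\,L_{(1+\eta)\lambda}$, where $L_\mu=|\{x>s:\,q(x)<\mu\}|$ and $b_\mu$ denotes the turning point $q(b_\mu)=\mu$. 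To convert this into an $\mathcal{O}(\sqrt\eta)$ \emph{relative} error you need, writing $\Phi(\lambda)=\int_s^\infty(\lambda-q)_+^{1/2}\,dx$, two facts that hold precisely because $q$ is convex: (i) $\Phi(\lambda)\ge\frac{2}{3}\sqrt\lambda\,L_\lambda$, obtained by bounding $q$ from above on $(s,b_\lambda)$ by the chord joining $(s,q(s))$ to $(b_\lambda,\lambda)$; and (ii) $L_{(1+\eta)\lambda}\le(1+\eta)L_\lambda$, obtained by comparing the slope of $q$ on $(b_\lambda,b_{(1+\eta)\lambda})$ from below with its average slope on $(s,b_\lambda)$. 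Both inequalities fail for concave, slowly growing potentials (where $L_{(1+\eta)\lambda}/L_\lambda$ can blow up), so without these two short computations your error balance does not close and the proof is incomplete at exactly the step that uses the hypothesis of the theorem.
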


\noindent A simple combination of the above results gives

\begin{lemma} \label{enough}
Assume that there exists some $x_c$ such that $q$ satisfies the
hypothesis of Theorem \ref{tit} or \ref{tit-2} for all $x>x_c$. Then
for any $s \geq x_c$ we have
\begin{equation} \label{egal}
N_\lambda\left(T^{D}_{(1,\infty)}\right) \, \sim \, \frac{1}{\pi}\,
\int_{1}^\infty \left( \lambda-q(x)\right)^{\frac 12}_+\, dx \, \sim
\, \frac{1}{\pi}\, \int_{s}^\infty \left( \lambda-q(x)\right)^{\frac
12}_+\, dx  \qquad \lambda\to\infty.
\end{equation}
\end{lemma}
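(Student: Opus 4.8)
The plan is to chain together Lemma \ref{1-dim}, the Titchmarsh asymptotics (Theorems \ref{tit} and \ref{tit-2}), and an elementary estimate showing that the Weyl-type integral $\int_s^\infty(\lambda-q(x))_+^{1/2}\,dx$ grows strictly faster than $\sqrt\lambda$, so that changing the lower endpoint of integration is asymptotically harmless.

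First, fix $s\geq x_c$. Since $q$ is continuous and $q(x)\to\infty$, Lemma \ref{1-dim} applies and gives $N_\lambda(T^D_{(1,\infty)})\sim N_\lambda(T^D_{(s,\infty)})$ as $\lambda\to\infty$. By hypothesis $q$ satisfies, on $(x_c,\infty)$ and hence on $(s,\infty)$, the assumptions either of Theorem \ref{tit} or of Theorem \ref{tit-2}; in both cases the relevant condition concerns only the behaviour of $q$ near infinity, so restricting the one-dimensional operator to $(s,\infty)$ is legitimate. Therefore
\begin{equation*}
N_\lambda(T^D_{(s,\infty)}) \, \sim \, \frac{1}{\pi}\int_s^\infty (\lambda-q(x))_+^{1/2}\,dx \qquad \lambda\to\infty,
\end{equation*}
and combining this with the previous equivalence yields $N_\lambda(T^D_{(1,\infty)})\sim \frac{1}{\pi}\int_s^\infty(\lambda-q(x))_+^{1/2}\,dx$.

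It remains to check that the lower endpoint of integration can be moved between any two values $\geq 1$ without affecting the asymptotics, which in particular identifies the integral over $(s,\infty)$ with the one over $(1,\infty)$ in \eqref{egal}. The difference
$$
\int_1^\infty (\lambda-q(x))_+^{1/2}\,dx - \int_s^\infty (\lambda-q(x))_+^{1/2}\,dx \, = \, \int_1^s (\lambda-q(x))_+^{1/2}\,dx
$$
is at most $(s-1)\sqrt\lambda$, hence $\mathcal{O}(\sqrt\lambda)$. On the other hand, setting $I_\lambda:=\{x>s:\, q(x)<\lambda/2\}$, the hypothesis $q\to\infty$ forces $|I_\lambda|\to\infty$, so that
$$
\int_s^\infty (\lambda-q(x))_+^{1/2}\,dx \, \geq \, \sqrt{\tfrac{\lambda}{2}}\,|I_\lambda| \, = \, \sqrt{\lambda}\cdot o(1)^{-1},
$$
i.e.\ $\lambda^{-1/2}\int_s^\infty(\lambda-q(x))_+^{1/2}\,dx\to\infty$. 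Thus the $\mathcal{O}(\sqrt\lambda)$ correction is negligible compared with the leading integral, and the two integrals in \eqref{egal} are asymptotically equivalent. Together with the previous step this proves \eqref{egal}.

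The whole argument is essentially bookkeeping; the only point needing a little care is the last one — verifying that the Titchmarsh integral genuinely dominates the $\mathcal{O}(\sqrt\lambda)$ endpoint correction — and this is precisely the divergence $|I_\lambda|\to\infty$ already used in the proof of Lemma \ref{1-dim}.
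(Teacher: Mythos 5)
Your proof is correct and, as far as I can tell, is exactly the "simple combination" the author had in mind: the paper does not spell out a proof of Lemma \ref{enough}, stating only that it follows from Lemma \ref{1-dim} and Theorems \ref{tit}, \ref{tit-2}, and your argument supplies precisely the bookkeeping that the author left to the reader — Lemma \ref{1-dim} to replace $T^D_{(1,\infty)}$ by $T^D_{(s,\infty)}$, Titchmarsh to convert $N_\lambda(T^D_{(s,\infty)})$ into the Weyl integral over $(s,\infty)$, and then the observation that $\lambda^{-1/2}\int_s^\infty(\lambda-q)_+^{1/2}\,dx\to\infty$ (via $|I_\lambda|\to\infty$, the same device used in the proof of Lemma \ref{1-dim}) so that the $\mathcal{O}(\sqrt\lambda)$ contribution from $(1,s)$ is negligible. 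Two minor stylistic remarks that do not affect correctness: the notation "$\sqrt\lambda\cdot o(1)^{-1}$" is unusual and better replaced by the displayed inequality you in fact prove; and the bound "$(s-1)\sqrt\lambda$" implicitly assumes $q\geq 0$ on $(1,s)$, but since $q$ is continuous on a compact interval the contribution is $\mathcal{O}(\sqrt\lambda)$ in any case, which is all that is needed.
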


\noindent Next we consider the operators
$$
\B_n^{N/D} = -\partial_x^2 -\frac{1}{f^2(x)}\, \, \partial_y^2 \quad
\text{in\, \, } L^2((n,\infty)\times(-1,1))
$$
subject to Dirichlet boundary conditions on $(n,\infty)\times
(\{1\}\cup \{-1\})$ and Neumann/Dirichlet boundary condition on
$\{n\}\times(-1,1)$ respectively. We have

\begin{lemma} \label{B-op}
For any $n\in\N$ it holds
\begin{align}
N_\lambda\left( \B_n^{N}\right) & \sim \, \, N_\lambda\left(\B_n^{D}\right) \,
\sim \, \frac{ \lambda}{2\pi}\, \, \int_n^\infty f(x)\, dx \qquad
 \lambda\to\infty \quad \text{if \, \,} |\Omega| < \infty  \label{B-2}, \\
 N_\lambda\left( \B_n^{N}\right) & \sim \, \, N_\lambda\left(\B_n^{D}\right)\,
\sim\, N_\lambda(\B) \qquad \qquad \qquad \,  \lambda\to\infty \quad \text{if\,
\, } |\Omega|=\infty. \label{B-1}
\end{align}
\end{lemma}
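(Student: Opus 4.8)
The plan is to diagonalise in the bounded $y$-variable, reducing both $\B_n^{N}$ and $\B_n^{D}$ to orthogonal sums of one-dimensional Schr\"odinger operators, and then to pin down the two-term behaviour by Dirichlet--Neumann bracketing against strips with constant coefficients. The normalised Dirichlet eigenfunctions of $-\partial_y^2$ on $(-1,1)$ are $\sin\!\big(k\pi(y+1)/2\big)$ with eigenvalues $k^2\pi^2/4$, $k\in\N$, so
$$
\B_n^{D}\simeq\bigoplus_{k\geq1}T_k^{D},\qquad
\B_n^{N}\simeq\bigoplus_{k\geq1}T_k^{N},\qquad
T_k^{D/N}=-\frac{d^2}{dx^2}+\frac{k^2\pi^2}{4\,f^2(x)}\ \ \text{in}\ \ L^2(n,\infty),
$$
with Dirichlet, respectively Neumann, condition at $x=n$. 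Since $f(x)\to0$ each potential tends to $+\infty$, so all these operators have discrete spectrum; and $M_n:=\sup_{x\geq n}f<\infty$ gives $\inf\spec T_k^{D/N}\geq k^2\pi^2/(4M_n^2)$, so at most $2M_n\sqrt\lambda/\pi$ of the indices $k$ carry an eigenvalue below $\lambda$. Turning a Neumann into a Dirichlet condition at one point is a rank-one perturbation, so \eqref{rank1} gives $0\le N_\lambda(T_k^{N})-N_\lambda(T_k^{D})\le1$, and summing over the active indices,
$$
0\ \le\ N_\lambda(\B_n^{N})-N_\lambda(\B_n^{D})\ \le\ \frac{2M_n}{\pi}\,\sqrt\lambda ,\qquad\lambda\to\infty .
$$
Hence it suffices to analyse $N_\lambda(\B_n^{D})$ and to verify that it grows faster than $\sqrt\lambda$ in each case.

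Assume first $|\Omega|<\infty$, i.e.\ $f$ is integrable; then I claim $N_\lambda(\B_n^{D})\sim\frac{\lambda}{2\pi}\int_n^\infty f$. Fix $R>n$ so large that $f$ is non-increasing on $[R,\infty)$ (Assumption \ref{ass-f1}) and bracket at $x=R$. On the finite part $(n,R)\times(-1,1)$, Dirichlet--Neumann bracketing over a fine partition of $[n,R]$ together with replacing $f$ on each cell by its infimum, respectively supremum --- for which the eigenvalues of $-\partial_x^2-c^{-2}\partial_y^2$ are explicit --- shows, after sending the mesh to $0$, that the counting functions with either boundary condition at $x=R$ are asymptotically $\frac{\lambda}{2\pi}\int_n^R f$. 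On the tail I would cut $(R,\infty)$ into the unit cells $[R+j,R+j+1]$ with Neumann cuts, bound $f^{-2}$ below by $(f_j^+)^{-2}$ with $f_j^+=\sup_{[R+j,R+j+1]}f$, and sum the resulting explicit estimates; using $\sum_j f_j^+\le\int_{R-1}^\infty f<\infty$ and the elementary fact that a non-increasing integrable $f$ satisfies $x f(x)\to0$ (so that only $o(\sqrt\lambda)$ cells are active) one obtains $\limsup_{\lambda\to\infty}\lambda^{-1}N_\lambda(\B_R^{N})\le\frac{1}{2\pi}\int_{R-1}^\infty f$. Inserting these bounds into the bracketing inequality $N_\lambda(\text{finite part, }DD)\le N_\lambda(\B_n^D)\le N_\lambda(\text{finite part, }DN)+N_\lambda(\B_R^N)$ and letting $R\to\infty$ gives $N_\lambda(\B_n^{D})\sim\frac{\lambda}{2\pi}\int_n^\infty f\gg\sqrt\lambda$; the gap estimate of the first paragraph then yields \eqref{B-2}.

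Now suppose $|\Omega|=\infty$, i.e.\ $\int_1^\infty f=\infty$. Since a Neumann condition at $\{x=n\}$ makes the operator smaller and a Dirichlet one larger,
$$
N_\lambda(\B_n^{D})\ \le\ N_\lambda(\B)\ \le\ N_\lambda(\B_{(1,n)}^{D,N})+N_\lambda(\B_n^{N})\ \le\ N_\lambda(\B_n^{D})+N_\lambda(\B_{(1,n)}^{D,N})+\frac{2M_n}{\pi}\sqrt\lambda ,
$$
where $\B_{(1,n)}^{D,N}$ is the operator on $(1,n)\times(-1,1)$ with Dirichlet at $x=1$, Neumann at $x=n$, Dirichlet on $y=\pm1$, whose counting function is $O(\lambda)$ (a bounded-strip estimate). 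Thus \eqref{B-1} follows once I show $N_\lambda(\B_n^{D})/\lambda\to\infty$. For this I argue as in the proof of Lemma \ref{1-dim}: restricting $T_k^{D}$ to $I_{k,\lambda}:=\{x>n:\ k^2\pi^2/(4f^2(x))<\lambda/2\}$ and discarding the potential gives $N_\lambda(T_k^{D})\ge\frac{\sqrt{\lambda/2}}{\pi}|I_{k,\lambda}|-m_0$, where $m_0$ bounds the (finite) number of connected components of the superlevel sets of $f$. With $g(\eps):=|\{x>n:\ f(x)>\eps\}|$ one has $|I_{k,\lambda}|=g\!\big(k\pi/\sqrt{2\lambda}\big)$, and summing the Riemann-type bound $\sum_{k\ge1}g(k\delta)\ge\delta^{-1}\int_\delta^\infty g$ (valid since $g$ is non-increasing) yields
$$
N_\lambda(\B_n^{D})\ \ge\ \frac{\lambda}{\pi^2}\int_{\pi/\sqrt{2\lambda}}^{\infty}g(\eps)\,d\eps\ -\ O(\sqrt\lambda) .
$$
By the layer-cake formula $\int_0^\infty g(\eps)\,d\eps=\int_n^\infty f=\infty$, so $\int_{\pi/\sqrt{2\lambda}}^\infty g\to\infty$ by monotone convergence; hence $N_\lambda(\B_n^{D})\gg\lambda$, and with the displayed squeeze and the gap estimate this gives \eqref{B-1}.

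I expect the main difficulty to be the bracketing in the finite-volume case: one must carry the error terms --- $O(\sqrt\lambda)$ per cell and $O(1)$ per mode --- uniformly in the cell and mode indices, control the number of active tail cells via $x f(x)\to0$, and interchange the mesh-refinement limit with $\lambda\to\infty$ in the right order so as to recover the sharp constant $\frac{1}{2\pi}$. The infinite-volume lower bound $N_\lambda(\B_n^{D})\gg\lambda$ is conceptually simpler but rests on the layer-cake identity converting $|\Omega|=\infty$ into the divergence of $\int_{0^+}g$.
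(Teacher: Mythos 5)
Your argument is correct and shares the paper's skeleton: you diagonalise in $y$ to reduce $\B_n^{N/D}$ to orthogonal sums of one-dimensional operators $-\frac{d^2}{dx^2}+\frac{k^2\pi^2}{4f^2}$, observe that only $O(\sqrt\lambda)$ modes are active, use the rank-one bound \eqref{rank1} to get $0\le N_\lambda(\B_n^N)-N_\lambda(\B_n^D)=O(\sqrt\lambda)$, show that $N_\lambda(\B_n^D)$ grows faster than $\sqrt\lambda$, and finally bracket $\B$ against $\B_n^{N/D}$. Where you diverge is in how the Weyl-type asymptotics for $\B_n^D$ are obtained. The paper simply invokes Safarov--Vassiliev \cite[Thm.~1.2.1]{sv} — once directly for \eqref{B-2}, and once on the truncated operators $\B_{n,m}^D$, letting $m\to\infty$, to get superlinear growth for \eqref{B-1}. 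You instead re-derive the finite-volume asymptotics by a two-scale Dirichlet--Neumann tiling (fine mesh on a compact core, unit cells on the tail, with the number of active tail cells controlled via $xf(x)\to0$ and the per-cell error summed against $\sum_j f_j^+<\infty$), and you replace the truncation argument for $|\Omega|=\infty$ by a rather elegant layer-cake lower bound: $|I_{k,\lambda}|=g(k\pi/\sqrt{2\lambda})$ plus a Riemann-sum comparison gives $N_\lambda(\B_n^D)\gtrsim\frac{\lambda}{\pi^2}\int_{\pi/\sqrt{2\lambda}}^\infty g$, which diverges relative to $\lambda$ precisely because $\int_0^\infty g=\int_n^\infty f=\infty$. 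What the paper's route buys is brevity — it treats the two-dimensional Weyl law as known. What yours buys is self-containment and a somewhat more transparent mechanism for the infinite-volume case (the divergence is read off directly from $|\Omega|=\infty$ via Fubini rather than by exhausting with compact pieces). One small point to tidy up if you were to write this out in full: the superlevel sets $\{f>\eps\}$ need not have boundedly many components on all of $(n,\infty)$ without extra hypotheses; you should discard the bounded region where monotonicity may fail (Assumption \ref{ass-f1} only gives $f'\le0$ eventually), which costs only $O(\sqrt\lambda)$ and does not affect the conclusion.
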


\begin{proof}
Equation \eqref{B-2} for $\B_n^D$ follows directly from \cite[Thm.
1.2.1]{sv}. Hence it remains to prove \eqref{B-2} for $\B_n^N$ and
\eqref{B-1}. Note that
$$
N_\lambda(\B_n^D) =  \sum_{k=1}^\infty  N_\lambda(L^D_{k,n})\, ,
\quad N_\lambda(\B_n^N) =  \sum_{k=1}^\infty  N_\lambda(L^N_{k,n}) ,
\quad \mbox{where} \quad  L^{N/D}_{k,n} = -\frac{d^2}{dx^2}
+\frac{\pi^2 k^2}{4 f(x)^2}\
$$
are one-dimensional operators acting in $L^2(n,\infty)$ with
Neumann/Dirichlet boundary conditions on $x=n$. Obviously there
exists a positive constant $c$ such that for any $n$ and any $k$ the
operator inequality $L^D_{k,n} \geq L^N_{k,n} \geq c\, k^2$ holds.
This means that there exists some $K( \lambda)$ with $K( \lambda) =
\mathcal{O}(\sqrt{ \lambda})$ as $ \lambda\to\infty$ and such that
$$
\sum_{k\geq 1} N_\lambda(L^N_{k,n}) = \sum_{k\geq 1}^{K( \lambda)}
N_\lambda(L^N_{k,n}), \quad \sum_{k\geq 1} N_\lambda(L^D_{k,n}) = \sum_{k\geq
1}^{K( \lambda)} N_\lambda(L^D_{k,n})
$$
Moreover, since $ 0 \leq N_\lambda(L^N_{k,n}) - N_\lambda(L^D_{k,n})
\leq 1$ holds for all $n\in\N$ and for all $k\geq 1$, see
\eqref{rank1},
\begin{equation} \label{sqrt}
\sum_{k\geq 1} N_\lambda(L^D_{k,n}) =  \sum_{k\geq 1}^{K( \lambda)}
N_\lambda(L^D_{k,n}) \, \leq \, \sum_{k\geq 1}^{K( \lambda)} N_\lambda(L^N_{k,n}) \,
\leq \, \sum_{k\geq 1} N_\lambda(L^D_{k,n}) + \mathcal{O}(\sqrt{ \lambda}).
\end{equation}
The latter implies \eqref{B-2} since $N_\lambda(\B_n^D)$ grows
linearly in $ \lambda$ when $|\Omega| <\infty$ as mentioned above.
To prove \eqref{B-1}  we consider the operators $\B_{n,m}^D$
obtained from $\B_n^D$ by putting additional Dirichlet boundary
condition at $\{x=m\},\, m>n$. From \cite[Thm. 1.2.1]{sv} we get
$$
\liminf_{ \lambda\to\infty}\,  \lambda^{-1} N_\lambda(\B_n^D) \geq
\liminf_{ \lambda\to\infty}\,  \lambda^{-1} N_\lambda(\B_{n,m}^D) = \frac{1}{2\pi}\,
\int_n^m\, f(x)\, dx\qquad \forall\, m>n,
$$
which implies, by letting $m\to\infty$, that $\liminf_{ \lambda\to\infty}\,  \lambda^{-1} N_\lambda(\B_n^D) = \infty$. In view of
\eqref{sqrt} we obtain $N_\lambda\left( \B_n^{N}\right) \sim
N_\lambda\left(\B_n^{D}\right)$. Finally, from the Dirichlet-Neumann
bracketing we deduce that $N_\lambda\left(
\B\right) \sim N_\lambda\left(\B_n^{D}\right)$.
\end{proof}


\section{Proofs of the main results}
\label{proof}

\noindent As mentioned in the introduction, the idea of the proof is
to split $N_\lambda(A_\sigma)$ into two parts corresponding to the
contribution from a finite part of $\Omega$ and from the tail.

\subsection{Step 1}
\label{step1} Here we show that the contribution from the part of
$\Omega$ where $x<n$ obeys the Weyl asymptotic irrespectively of the
boundary conditions. Let us define
\begin{align*}
\Omega_{n} & := \left\{(x,y) \in\Omega\, : \, 1<x< n \right\}, \quad
E_n := \Omega\setminus\Omega_n.
\end{align*}
We denote by $Q^N_{n,l}$ and $Q^N_{n,r}$ the quadratic forms defined
by the reduction of $Q_\sigma$ on $\Omega_{n}$ and $E_n$ and acting
on the functions from $C^2(\overline{\Omega}_{n})$ and
$C_0^2(\overline{E_n})$ respectively. Moreover, let $T^N_n$ and
$S^N_n$ be the operators associated with the closures of the forms
$Q^N_{n,l}$ and $Q^N_{n,r}$.

Similarly we denote by $Q^D_{n,l}$ and $Q^D_{n,r}$ the respective
quadratic forms which are defined in the same way as $Q^N_{n,l}$ and
$Q^N_{n,r}$ but with the additional Dirichlet boundary condition at
$\{x=n\}$. We then denote by $T^D_n$ and $S^D_n$ the operators
associated with the closures of the forms $Q^D_{n,l}$ and
$Q^D_{n,r}$. From the Dirichlet-Neumann bracketing we obtain
the operator inequality
\begin{equation} \label{two-sided-operators}
T^N_n \oplus S^N_n \leq A_\sigma \leq T^D_n \oplus S^D_n, \quad
n\in\N,
\end{equation}
which implies that
\begin{equation} \label{two-sided}
N_\lambda(T^D_n) + N_\lambda(S^D_n) \leq N_\lambda(A_\sigma) \leq
N_\lambda(T^N_n) + N_\lambda(S^N_n), \quad n\in\N,\, \, \lambda>0.
\end{equation}

\begin{lemma} \label{finite-part}
For any $n\in\N$ it holds
\begin{equation} \label{weyl-bis}
\lim_{ \lambda\to\infty} \, \lambda^{-1}\, N_\lambda(T^D_n) = \lim_{
\lambda\to\infty} \, \lambda^{-1}\, N_\lambda(T^N_n) \, =
\frac{1}{2\pi}\, \int_1^n f(x)\, dx.
\end{equation}
\end{lemma}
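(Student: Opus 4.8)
The plan is to prove the Weyl law for $T^N_n$ and $T^D_n$, the Laplace-type operators on the bounded piece $\Omega_n$. Since $\Omega_n$ is a bounded domain with smooth boundary (on the part where $f>0$), the underlying operator is essentially a Laplacian with a bounded Robin-type term on part of the boundary, and Dirichlet conditions on the rest. The basic mechanism is that the boundary term $\int \sigma(\cdot)|u|^2$, being a lower-order perturbation supported on a hypersurface, does not affect the leading Weyl term; and the choice of Neumann versus Dirichlet condition at the single cross-section $\{x=n\}$ likewise contributes only a lower-order correction. So the task reduces to the classical Weyl asymptotics for the Dirichlet/Neumann Laplacian on $\Omega_n$, whose leading coefficient is $|\Omega_n|/(4\pi) = \frac{1}{2\pi}\int_1^n f(x)\,dx$ (using $|\Omega_n| = 2\int_1^n f$).

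First I would record that $\Omega_n$ is bounded with Lipschitz (in fact piecewise smooth) boundary, so that both the Dirichlet and Neumann Laplacians $-\Delta^D_{\Omega_n}$ and $-\Delta^N_{\Omega_n}$ have discrete spectrum obeying \eqref{weyl-cl}, i.e. $N_\lambda = \frac{\lambda}{4\pi}|\Omega_n| + o(\lambda)$; the Neumann case holds because the boundary is sufficiently regular, invoking the references already cited in the introduction (\cite{iv1,net,ns}). Next, I would sandwich $T^N_n$ and $T^D_n$ between these pure Laplacians. Since $\sigma \geq 0$, the boundary form is nonnegative, so $-\Delta^N_{\Omega_n} \leq T^N_n$ and $-\Delta^D_{\Omega_n} \leq T^D_n$ in the form sense, giving upper bounds on the counting functions. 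For lower bounds on $N_\lambda(T^{N/D}_n)$ I would dominate the boundary term from above: since $\sigma$ is bounded, $\|\sigma\|_\infty = C$, and by a standard trace inequality $\int_{\partial\Omega_n} |u|^2\,ds \leq \eps \int_{\Omega_n}|\nabla u|^2 + C_\eps \int_{\Omega_n}|u|^2$, one gets $Q_\sigma[u] \leq (1+C\eps)\int|\nabla u|^2 + CC_\eps\int|u|^2$, hence $T^{N/D}_n \leq (1+C\eps)(-\Delta^{N/D}_{\Omega_n}) + CC_\eps$. Combining with Assumption \ref{number}-type scaling — or more simply just with \eqref{weyl-cl} applied to the rescaled operator, since $(1+C\eps)(-\Delta_{\Omega_n}^{N/D})$ still satisfies Weyl's law with coefficient $\frac{1}{4\pi(1+C\eps)}|\Omega_n|$ — and letting $\eps\to 0$ after dividing by $\lambda$ and taking the limit, the liminf and limsup of $\lambda^{-1}N_\lambda(T^{N/D}_n)$ are squeezed to $\frac{|\Omega_n|}{4\pi} = \frac{1}{2\pi}\int_1^n f$.

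The main obstacle, such as it is, is handling the Neumann Laplacian on $\Omega_n$: one must be sure the boundary is regular enough for the Neumann Weyl law, which requires checking that $\partial\Omega_n$ has no pathological features. Here $\Omega_n = \{1<x<n,\ |y|<f(x)\}$ with $f \in C^\infty$ and $f>0$ on the relevant range (for $n$ fixed and finite we may, if necessary, assume $f>0$ on $(1,n]$, or otherwise split off the region where $f$ vanishes, which has zero contribution), so the boundary is piecewise smooth with corners only at $x=1$ and at $x=n$; such corner domains are classically covered (the corners contribute to the remainder, not the leading term). The only other point needing a line of care is the trace inequality with the explicit $\eps$-dependence on a fixed Lipschitz domain, which is standard. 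I do not expect to need Assumption \ref{number} here: the leading term of a genuinely two-dimensional Weyl law is stable under the $(1+C\eps)$ dilation directly from \eqref{weyl-cl}.

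Finally I would remark that since both $T^N_n$ and $T^D_n$ are trapped between the same pair of scaled Laplacians up to the additive constant $CC_\eps$ (which disappears after dividing by $\lambda$), their leading coefficients coincide, which is exactly the asserted equality of the two limits in \eqref{weyl-bis}. This completes the proof.
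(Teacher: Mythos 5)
Your proposal follows essentially the same strategy as the paper: sandwich $T^N_n$ and $T^D_n$ between the Dirichlet and Neumann Laplacians on the bounded piece $\Omega_n$, then invoke the classical two-dimensional Weyl asymptotics (the Neumann case requires the $H^1$-extension property of $\Omega_n$, which both you and the paper note). There is, however, a slip in the direction of one of your form inequalities. You claim $-\Delta^D_{\Omega_n}\leq T^D_n$ in the form sense, but the form domain of $T^D_n$ (the closure of $C^2$ functions vanishing at $x=n$, with free trace on $|y|=f(x)$) \emph{strictly contains} $H^1_0(\Omega_n)$, so the correct ordering is the reverse: $T^D_n\leq -\Delta^D_{\Omega_n}$, which gives $N_\lambda(T^D_n)\geq N_\lambda(-\Delta^D_{\Omega_n})$ -- a \emph{lower} bound, not the upper bound you assert. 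The paper obtains the needed upper bound for $T^D_n$ instead from the trivial inclusion $D(Q^D_{n,l})\subset D(Q^N_{n,l})$, i.e.\ $N_\lambda(T^D_n)\leq N_\lambda(T^N_n)$, closing the four-term chain $N_\lambda(-\Delta^D_{\Omega_n})\leq N_\lambda(T^D_n)\leq N_\lambda(T^N_n)\leq N_\lambda(-\Delta^N_{\Omega_n})$. Your slip is easily repaired by the same observation.

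One more remark on technique: your trace inequality $\int_{\partial\Omega_n}|u|^2\leq\eps\int|\nabla u|^2+C_\eps\int|u|^2$ and the resulting $\eps$-limit are heavier than necessary. For the Dirichlet lower bound, the trace of an $H^1_0(\Omega_n)$ function vanishes on $\partial\Omega_n$, so $Q^D_{n,l}$ restricted to $H^1_0$ coincides \emph{exactly} with $\int|\nabla u|^2$; no $\eps$ is needed. The paper's only use of the trace embedding is to verify that the form domain of $T^N_n$ sits inside $H^1(\Omega_n)$ (via the one-dimensional embedding $H^1(-f(x),f(x))\hookrightarrow L^\infty$, uniformly over $x\in(1,n)$), after which the sandwich is a pure form-domain comparison with no $\eps$-scaling at all. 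Your approach is correct in spirit and reaches the same conclusion, but the paper's version is cleaner and avoids both the wrong-direction inequality and the $\eps\to0$ step.
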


\begin{proof}
Fix $n\in\N$. Since $\sigma$ is bounded and $H^1(-f(x),f(x))$ is for
every $x\in (1,n)$ continuously embedded into
$L^\infty(-f(x),f(x))$, it follows that there exists a constant
$c_n$ such that
$$
\|\nabla u\|^2_{L^2(\Omega_n)}+\|u\|^2_{L^2(\Omega_n)}\, \leq\,
Q^N_{n,l}[u] +\|u\|^2_{L^2(\Omega_n)}\, \leq \, c_n \left(\,
\|\nabla u\|^2_{L^2(\Omega_n)} +\|u\|^2_{L^2(\Omega_n)}\right)
$$
holds for all $u\in C^2(\overline{\Omega_n})$. Hence the domain of
the closure of the quadratic form $Q^N_{n,l}$ is a subset of
$H^1(\Omega_n)$. The same reasoning shows that the domain of the
closure of $Q^D_{n,l}$ contains the space $H_0^1(\Omega_n)$. From
the fact that $\sigma \geq 0$ and from the variational principle we
thus conclude that
\begin{equation} \label{dn-bracket}
N_\lambda(-\Delta^D_{\Omega_n}) \leq N_\lambda(T^D_n) \leq
 N_\lambda(T^N_n) \leq N_\lambda(-\Delta^N_{\Omega_n}),
\end{equation}
where $-\Delta^D_{\Omega_n}$ and $-\Delta^N_{\Omega_n}$ denote the
Dirichlet and Neumann Laplacian on $\Omega_n$ respectively. Since
$\Omega_n$ has the $H^1-$extension property, the Weyl formula
$$
\lim_{ \lambda\to\infty} \, \lambda^{-1}\,
N_\lambda(-\Delta^D_{\Omega_n})= \lim_{ \lambda\to\infty} \,
\lambda^{-1}\, N_\lambda(-\Delta^N_{\Omega_n}) =
\frac{|\Omega_n|}{4\pi}
$$
holds for both $-\Delta^D_{\Omega_n}$ and $-\Delta^N_{\Omega_n}$, see \cite{me,bs},  \cite{ns}. In
view of \eqref{dn-bracket}, this completes the proof.
\end{proof}


\subsection{Step 2} \label{step 2}
Next we will treat the contribution to the counting function
$N_\lambda(A_\sigma)$ from the tail of $\Omega$. Our first aim is to
transform the boundary term in \eqref{q-form} into en  effective
additional potential. To this end we use a ground state
representation for the test functions $\psi$. Let $\mu(x)$ be the
first eigenvalue of the one-dimensional problem
\begin{align} \label{robin}
-\partial_y^2\, v(x,y) & = \mu(x)\, v(x,y), \\
\partial_y v(x,-f(x)) = \sigma(x)\, v(x,-f(x)),\ \  &
\ \ \partial_y v(x,f(x)) = -\sigma(x)\, v(x,f(x)) \nonumber
\end{align}
with the corresponding eigenfunction $v$. By lemma \ref{implicit}
$0<v\leq 1$ and $v(x,y)\to 1$ as $x\to\infty$ uniformly in $y$.
Moreover, $v\in C^2(\overline{E}_{n})$. Thus every function $\psi\in
D(\Q^N_n)$ can be written as
\begin{equation} \label{factor-N}
\psi(x,y) = v(x,y)\, \varphi(x,y), \quad \varphi\in
C_0^2(\overline{E}_{n})\, .
\end{equation}
Similarly, for every function $\psi\in D(\Q^D_n)$ we have
\begin{equation} \label{factor-D}
\psi(x,y) = v(x,y)\, \varphi(x,y), \quad \varphi\in
C_0^2(\overline{E}_{n})\cap \left\{\varphi :\, \varphi(n, \cdot) =0
\right\}
\end{equation}
In view of \eqref{factor-N} and \eqref{factor-D} we can thus
identify $Q^N_n[\psi]$ and $Q^D_n[\psi]$ with quadratic forms
$\Q^N_n[\varphi]$ and $\Q^D_n[\varphi]$ given by
$$
\Q^{N / D}_n[\varphi] = Q^{N /D}_n[v\, \varphi].
$$
and acting in the weighted space $L^2(E_{n}, v^2 dxdy)$. The forms
$\Q^N_n[\varphi]$ and $\Q^D_n[\varphi]$ are defined on $
D(\Q^N_n)=C_0^2(\overline{E}_{n})$ and $D(\Q^D_n) =
C_0^2(\overline{E}_{n})\cap \left\{\varphi :\, \varphi(n, \cdot) = 0
\right\}$ respectively. A straightforward calculation based on
integration by parts in $y$ then gives
\begin{align} \label{gr-state}
\Q^{N,D}_n[\varphi] & = \int_{E_{n}} \left( |\partial_x (v
\varphi)|^2 +
 \mu(x)\, v^2\, |\varphi|^2+ v^2\, |\partial_y\varphi|^2 \right )\, dxdy.
\end{align}

\noindent Since $v\to 1$ and $\mu(x)\sim \sigma(x)/f(x)$ as
$x\to\infty$, see appendix, it is natural to compare $\Q^{N,D}_n$
with the quadratic form
$$
q_n[\varphi]= \int_{E_n} \! \big( |\partial_x\varphi|^2
+|\partial_y\varphi|^2\, + \frac{\sigma(x)}{f(x)}\,
|\varphi|^2\big)\, dxdy .
$$
Let $\Sf_n^N$ and $\Sf_n^D$ be the operators in $L^2(E_n)$ generated
by the closures of the quadratic form $q_n[u]$ on $D(\Q^N_n)$ and
$D(\Q^D_n)$ respectively.

\begin{lemma} \label{lem-intermed}
Suppose that assumptions \ref{ass-f1} and \ref{ass-h} are satisfied.
For any $\eps$ there exists an $N_\eps$ such that for all $n>N_\eps$
\begin{equation} \label{intermed}
N_\lambda(S^N_n) \leq N_\lambda((1-\eps)\Sf_n^N -\eps), \quad
N_\lambda(S^D_n) \geq N_\lambda((1+\eps)\Sf_n^D +\eps) \quad
\lambda>0 .
\end{equation}
\end{lemma}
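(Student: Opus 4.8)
\textbf{Proof plan for Lemma \ref{lem-intermed}.}
The plan is to compare, on the tail region $E_n$, the ground-state-transformed form \eqref{gr-state} with the model form $q_n$, and to convert the discrepancy into a multiplicative factor $(1\pm\eps)$ plus an additive $\pm\eps$ by using that all the relevant quantities — $v$, $\mu(x)f(x)/\sigma(x)$, and $f'/f$ — converge to their limiting values uniformly once $n$ is large. First I would record the two facts I am allowed to invoke from the appendix and Lemma \ref{implicit}: namely $0<v\le 1$ with $v(x,y)\to 1$ uniformly in $y$ as $x\to\infty$, and $\mu(x)\sim\sigma(x)/f(x)$. I would also want control on $\partial_x v$; since $v$ solves the Robin eigenvalue problem \eqref{robin} with smooth coefficients that stabilise at infinity, differentiating in $x$ shows $\partial_x v\to 0$, and more precisely $|\partial_x v|\le\eta(x)$ with $\eta(x)\to 0$. (If the appendix gives this, I cite it; otherwise I note it follows from implicit differentiation of \eqref{robin} together with Assumptions \ref{ass-f1}, \ref{ass-h}.)

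The core estimate is the bound on $\Q^{N,D}_n[\varphi]$ in terms of $q_n[\varphi]$. Expanding $|\partial_x(v\varphi)|^2 = v^2|\partial_x\varphi|^2 + 2v\,\partial_x v\,\re(\bar\varphi\,\partial_x\varphi) + |\partial_x v|^2|\varphi|^2$ and using the elementary inequality $2|a||b|\le\delta a^2+\delta^{-1}b^2$ with $\delta$ a fixed small parameter, I get, for $x\ge n$ large,
\begin{equation*}
(1-\delta)\, v^2|\partial_x\varphi|^2 - C\eta(x)^2|\varphi|^2 \le |\partial_x(v\varphi)|^2 \le (1+\delta)\, v^2|\partial_x\varphi|^2 + C\eta(x)^2|\varphi|^2 .
\end{equation*}
Since $v^2\in[(1-\eps)^2,1]$ on $E_n$ for $n$ large, the $|\partial_x\varphi|^2$ and $|\partial_y\varphi|^2$ terms of $\Q^{N,D}_n$ are squeezed between $(1\mp\eps')$ times the corresponding terms of $q_n$. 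For the zeroth-order term I use $\mu(x)v^2 = \big(\mu(x)f(x)/\sigma(x)\big)\cdot v^2\cdot\sigma(x)/f(x)$, and both $\mu(x)f(x)/\sigma(x)\to 1$ and $v^2\to 1$ uniformly, so $\mu(x)v^2|\varphi|^2$ is within a factor $(1\pm\eps')$ of $(\sigma(x)/f(x))|\varphi|^2$; the leftover $C\eta(x)^2|\varphi|^2$ from the cross term is absorbed because $\eta(x)^2\le\eps' \le\eps'\sigma(x)/f(x)\cdot(f(x)/\sigma(x))$... more simply, $\eta(x)^2\to 0$ so for $n$ large $C\eta(x)^2\le\eps$ pointwise, giving the additive $\pm\eps$. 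Also note $\Q^{N,D}_n$ and $q_n$ live on $L^2(E_n,v^2\,dxdy)$ and $L^2(E_n\, dxdy)$ respectively; since $v^2\in[(1-\eps)^2,1]$, the two $L^2$ norms are equivalent with constants $\to 1$, which I fold into the $(1\pm\eps)$ factors via the min-max characterisation of eigenvalues. Putting the form and norm comparisons together through the variational principle yields exactly \eqref{intermed}, with $N_\eps$ chosen so that all the above ``for $n$ large'' thresholds are met and the various $\eps'$ renamed to $\eps$.

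The main obstacle is bookkeeping the cross term $2v\,\partial_x v\,\re(\bar\varphi\,\partial_x\varphi)$ cleanly: it is the one place where the transformation is genuinely not a perturbation unless $\partial_x v$ decays, so I must be sure the decay rate of $\partial_x v$ (hence of $\eta$) is good enough that the absorbed error is truly $o(1)$ and not merely bounded. This is where the convexity/monotonicity hypotheses on $f$ from Assumption \ref{ass-f1} (via Lemma \ref{landau}, giving $f'/f\to 0$) and the boundedness of $\sigma,\sigma',\sigma''$ from Assumption \ref{ass-h} do the real work, since $\partial_x v$ is governed by $\partial_x$ of the coefficients in \eqref{robin}, i.e. essentially by $\sigma'$, $f'$ and $\mu'$. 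Once that decay is in hand, everything else is the routine squeezing described above, and the asymmetry between the $N$ and $D$ cases is only in the form domains $D(\Q^N_n)$ versus $D(\Q^D_n)$, which plays no role in the pointwise estimates and enters the conclusion solely through monotonicity of eigenvalues under form-domain restriction.
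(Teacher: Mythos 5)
Your proposal is correct and follows essentially the same argument the paper gives: expand $|\partial_x(v\varphi)|^2$, absorb the cross term by Cauchy--Schwarz, invoke the uniform convergence $v\to 1$, $\partial_x v\to 0$, $\mu f/\sigma\to 1$ from Lemma \ref{implicit} to squeeze $\Q^{N,D}_n$ between $(1\mp\eps)q_n\mp\eps\|\cdot\|^2$, handle the weighted-vs.-unweighted $L^2$ norms via $|v|\le 1$, and conclude by the variational principle. The decay of $\partial_x v$ that you flag as the potential obstacle is indeed supplied by the appendix (equation \eqref{v'}), so no gap remains.
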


\begin{proof}
Let $\epsilon>0$ and let $\varphi$ belong to the domain of the
quadratic forms $\Q^N_{n}\, (\Q^D_{n}$). From the fact that
$$
\lim_{x\to\infty}\, v(x,y) =1,\quad \lim_{x\to\infty}\,
\partial_x v(x,y) =0 \, \, \, (\text{uniformly\, in\, }\, y), \quad
\lim_{x\to\infty}\, \frac{\mu(x) f(x)}{ \sigma(x)} = 1 \, ,
$$
see Lemma \ref{implicit}, and from the estimate
$$ | 2v\partial_x v \, \varphi\, \partial_x \varphi|
\leq \, \epsilon\, |\partial_x \varphi|^2v^2 +\epsilon^{-1}\,
|\varphi|^2 |\partial_x v|^2
$$
we conclude that for $n$ large enough
\begin{align}
\Q^N_{n}[\varphi] & \geq (1-\epsilon) \int_{E_n} \left(
|\partial_x\varphi|^2 +|\partial_y\varphi|^2\, +
\frac{\sigma(x)}{f(x)}\, |\varphi|^2\right)\, dxdy  -\epsilon\,
\|\varphi\|^2_{L^2(E_n)}\nonumber \\
\Q^D_{n}[\varphi] & \leq (1+\epsilon) \int_{E_n} \left(
|\partial_x\varphi|^2 +|\partial_y\varphi|^2\, +
\frac{\sigma(x)}{f(x)}\, |\varphi|^2\right)\, dxdy + \epsilon\,
\|\varphi\|^2_{L^2(E_n)}\, . \label{equiv-norm}
\end{align}
Moreover, by \eqref{eq-v} we also have $|v|\leq 1$ so that (still
for $n$ large enough)
$$
(1-\epsilon) \|\varphi\|^2_{L^2(E_n)} \leq  \int_{E_n}\, |\varphi|^2
v^2\, dxdy \, \leq \, \|\varphi\|^2_{L^2(E_n)}.
$$
Equation \eqref{intermed} then follows from the variational
principle by choosing $\epsilon$ in appropriate way (depending on
$\eps$).
\end{proof}


\subsection{Step 3}
\label{step 3}
We transform the problem of studying the Laplace
operator on $E_n$ to the problem of studying a modified operator on
the simpler domain
$$
D_n = (n,\infty)\times (-1,1) .
$$
To this end we introduce the transformation $U:L^2(E_n) \to
L^2(D_n)$ defined by
$$
(U \varphi)(x,t) = \sqrt{f(x)}\, \, \varphi(x,f(x)\, t),\quad
(x,t)\in D_n.
$$
Let $\A_n^{N}$  and $\A_n^{D}$ be the operators associated with the
closure of the form
\begin{equation} \label{new-form}
\widehat{Q}_n[u] := q_n[U^{-1} u],
\end{equation}
on $C_0^2(\overline{D}_{n})$ and $C_0^2(\overline{D}_{n})\cap
\left\{u :\, u(n, \cdot) = 0 \right\}$ respectively. Since $U$ maps
$L^2(E_n)$ unitarily onto $L^2(D_n)$ and $U\,
C_0^2(\overline{E}_{n})=C_0^2(\overline{D}_{n})$, the variational
principle gives
\begin{equation} \label{n-equal} N_\lambda(\A_n^N) =
N_\lambda(\Sf_n^N),\quad N_\lambda(\A_n^D) = N_\lambda(\Sf_n^D).
\end{equation}
By a direct calculation
\begin{equation*} \label{bordel}
\widehat{Q}_n[u] = \int_{D_n} \! \big( |\partial_x u|^2+W_\sigma\, u^2
-2t\, \frac{f'}{f}\,
\partial_x u\partial_t u + \frac{f'^2}{f^2}(
t\, u\partial_t u+t^2|\partial_t u|^2) + \frac{1}{f^2}\, |\partial_t
u|^2\big)\, dxdt .
\end{equation*}

\noindent Now Let $\eta \in (0,1)$ be arbitrary. Since $|t| \leq 1$
we get
\begin{align*}
\big |2t\, \frac{f'}{f}\, \partial_x u\, \partial_t u\big| & \,
\leq \, \eta \, |\pd_x u|^2 + \eta^{-1}\, \frac{f'^2}{f^2}\, |\pd_t
u|^2, \quad \frac{f'^2}{f^2}\, |t\, u\, \pd_t u|  \, \leq \,
\eta^{-1} \, \frac{f'^4}{f^2}\, \, |u|^2+ \frac{\eta}{f^2}\, \,
|\pd_t u|^2 .
\end{align*}
Moreover, from \eqref{taylor-new} and from the fact $f$ is decreasing at infinity, by assumption  \ref{ass-f1}, it follows that
\begin{equation} \label{taylor}
f'(x)^2 \leq 2 f(x)\, \sup_{s\geq x}\, |f''(s)|,
\end{equation}
for all $x$ large enough. Since $f''\to 0$ as $x\to\infty$, for
any $\eta\in (0,1)$ there clearly exists an $N_\eta$ such that for any
$n>N_\eta$ it holds
\begin{align} \label{pm-estim}
\widehat{Q}_n[u] & \lessgtr  \int_{D_n}\!  \big( (1\pm \eta)
|\partial_x u|^2+ W_\sigma\, u^2 + \frac{1\pm 2\eta}{f^2}\,
|\partial_t u|^2 \pm \eta u^2 \big )\, dxdt .
\end{align}

\noindent We denote by $H^N_n$ and $H^{D}_n$ the operators acting in
$L^2(\Omega_{n,r})$ associated with the closures of the quadratic
form
$$
\int_{D_n} \big( |\partial_x u|^2 +\frac{|\partial_t
u|^2}{f^2(x)}\, + W_\sigma(x)\, u^2\big)\, dxdt
$$
defined on $C_0^2(\overline{D}_{n})$ and
$C_0^2(\overline{D}_{n})\cap \left\{u :\, u(n, \cdot) = 0 \right\}$
respectively.

\begin{lemma} \label{lem6.3}
Suppose that assumptions \ref{ass-f1} and \ref{ass-h} are satisfied.
For any $\eps$ there exists an $N_\eps$ such that for all $n>N_\eps$
and any $ \lambda>0$ it holds
\begin{equation} \label{2-side-eps}
N_\lambda(\A^N_n) \leq N_\lambda((1-\eps) H^N_n), \quad
N_\lambda(\A^D_n) \geq N_\lambda((1+\eps) H^D_n).
\end{equation}
\end{lemma}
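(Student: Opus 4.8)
The plan is to read off both inequalities in \eqref{2-side-eps} from the quadratic-form bound \eqref{pm-estim} by choosing the auxiliary parameter $\eta$ and the threshold $N_\eps$ appropriately, and then to translate the resulting form inequalities into inequalities for the counting functions via the variational principle.

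First I would fix $\eps\in(0,1)$ and set $\eta=\eps/2$, so that the coefficients $1\pm\eta$ and $1\pm2\eta$ occurring in \eqref{pm-estim} all lie in $(1-\eps,1+\eps)$ and, in particular, $1-2\eta>0$. Since $W_\sigma(x)\to\infty$ as $x\to\infty$ by Assumption \ref{ass-h}, one can then pick $N_\eps$ so large that $W_\sigma(x)\geq1$ for every $x>N_\eps$; for such $n$ the potential $W_\sigma$ is nonnegative on $D_n$ and the additive remainder on the right of \eqref{pm-estim} can be absorbed into the potential term via $\pm\eta\,u^2\leq\pm\eta\,W_\sigma\,u^2$ on $D_n$. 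Inserting this into \eqref{pm-estim} gives, for every $n>N_\eps$ and every admissible $u$,
\begin{equation*}
(1-\eps)\,h_n[u]\ \leq\ \widehat Q_n[u]\ \leq\ (1+\eps)\,h_n[u],\qquad
h_n[u]:=\int_{D_n}\Big(|\partial_x u|^2+\frac{|\partial_t u|^2}{f^2}+W_\sigma\,u^2\Big)\,dxdt,
\end{equation*}
where the left bound is used on $C_0^2(\overline{D}_n)$ and the right one on $C_0^2(\overline{D}_n)\cap\{u(n,\cdot)=0\}$; by definition $H^N_n$ and $H^D_n$ are precisely the operators generated by the closures of $h_n$ on these two cores.

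It then remains to pass from the above form bounds to \eqref{2-side-eps}. Since $W_\sigma\geq0$ on $D_n$, the displayed inequalities show in particular that the form norm of $\A^N_n$ dominates a multiple of the form norm of $H^N_n$ and, conversely, the form norm of $H^D_n$ dominates a multiple of that of $\A^D_n$; hence the form domain of $\A^N_n$ is contained in that of $H^N_n$, the form domain of $H^D_n$ is contained in that of $\A^D_n$, the displayed inequalities extend to these larger domains by taking closures, and the min-max principle yields $\lambda_k(\A^N_n)\geq(1-\eps)\,\lambda_k(H^N_n)$ and $\lambda_k(\A^D_n)\leq(1+\eps)\,\lambda_k(H^D_n)$ for all $k$, which is exactly \eqref{2-side-eps}.

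The only step that genuinely uses a hypothesis is the absorption of the $\pm\eta\,u^2$ term into the potential, which rests on $W_\sigma\to\infty$: without it one would merely obtain \eqref{2-side-eps} with an additional constant shift of $H^{N}_n$ and $H^D_n$, which would be useless for the asymptotics established afterwards. Everything else is routine bookkeeping of constants together with the standard form version of the variational principle.
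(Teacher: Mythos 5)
Your proposal is correct and follows essentially the same route as the paper: the paper's own proof is the one-liner ``in view of the fact that $W_\sigma(x)\to\infty$ the statement follows from \eqref{pm-estim},'' and you have simply filled in the implicit steps — take $\eta$ proportional to $\eps$, use $W_\sigma\to\infty$ to absorb the additive $\pm\eta u^2$ remainder into $\pm\eta W_\sigma u^2$ once $W_\sigma\geq1$ past the chosen threshold, and then apply the form version of the min-max principle. One small polish worth noting: \eqref{pm-estim} is a two-sided bound valid on the full core $C_0^2(\overline D_n)$ (and a fortiori on the Dirichlet subcore), so the closures of $\widehat Q_n$ and $h_n$ actually have identical form domains on each core; your one-sided ``the form norm of $\A^N_n$ dominates that of $H^N_n$'' statement and the resulting one-sided domain inclusions are enough for the min-max comparison you perform, but the cleaner observation is that the form norms are equivalent and the domains coincide. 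This is a cosmetic point, not a gap.
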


\begin{proof}
In view of the fact that $W_\sigma(x)\to\infty$ the statement
follows from \eqref{pm-estim}.
\end{proof}

\noindent Next we observe that since $W_\sigma$ depends only on $x$,
the matrix representations of the operators $H^N_n$ and $H^D_n$ in
the basis of (normalised) eigenfunctions of the operator
$-f(x)^{-2}\, \frac{d^2}{dt^2}$ on the interval $(-1, 1)$ with
Neumann boundary conditions are diagonal. We thus have the following
unitary equivalence:
\begin{equation} \label{ort-sum}
H^N_n \simeq \bigoplus_{k=0}^\infty \mathcal{H}^N_{k,n}\, , \quad
H^D_n \simeq \bigoplus_{k=0}^\infty \mathcal{H}^D_{k,n}, \qquad
\mathcal{H}^{N/D}_{k,n}= -\frac{d^2}{dx^2}\, + W_\sigma(x) +
\frac{k^2\pi^2}{4 f(x)^2}\, ,
\end{equation}
where $\mathcal{H}^{N/D}_{k,n}$ are operators in $L^2(n,\infty)$
with Neumann/Dirichlet boundary condition at $x=n$. We denote
$$
\mathcal{H}_{0,n}^N = \mathcal{H}^N_{n}, \quad \mathcal{H}_{0,n}^D =
\mathcal{H}^D_{n}\, .
$$

\noindent As a consequence of \eqref{ort-sum} we get

\begin{proof}[Proof of Theorem \ref{disc}]
We make use of inequality \eqref{two-sided-operators} for some fixed
$n$ and show that the operator on the left hand side of
\eqref{two-sided-operators} has purely discrete spectrum. By general
arguments of the spectral theory this will imply the statement.
Since the spectrum of $T^N_n$ is discrete, it suffices to show that
the same is true for $S^N_n$. In view of Lemma \ref{lem-intermed}
and equations \eqref{n-equal}, \eqref{2-side-eps} it is enough to
prove the discreteness of the spectrum of $H^{N}_n$. By
\eqref{ort-sum} we have
$$
\mbox{spect}(H^{N}_n) = \cup_{k=0}^\infty\,
\mbox{spect}(\mathcal{H}^N_{k,n}),
$$
First we notice that $\mbox{spect}(\mathcal{H}^N_{k,n})$ is purely
discrete for each $k$ and $n$. Indeed, a sufficient condition for
the spectrum of $\mathcal{H}^N_{k,n}$ to be purely discrete is that
\begin{equation} \label{infinity}
 W_\sigma(x) + \frac{k^2\pi^2}{4 f(x)^2} \to \infty
\quad \text{as\, \, } x\to\infty\, ,
\end{equation}
see e.g. \cite[Thm. 13.67]{rs}, which is a direct consequence of
assumption (\ref{ass-h}). Hence the spectrum of $H^N_n$ is pure
point, i.e. consists only of eigenvalues. Moreover, since $f^2(x)
W_\sigma(x) \to 0$ as $x\to\infty$ by \eqref{taylor} and boundedness
of $\sigma$, it is easy to see that
$$
\forall\, n \quad \inf\, \mbox{spect}(\mathcal{H}^N_{k,n}) \to
\infty\qquad \mbox{as}\quad k\to\infty.
$$
Hence all the eigenvalues in the spectrum of $H^N_n$ have finite
multiplicity and $\mbox{spect}(H^N_n)$ contains no finite point of
accumulation. This means that $\mbox{spect}(H^N_n)$ is discrete.
\end{proof}


\begin{proof}[Proof of Theorem \ref{principle}] Case
$|\Omega|<\infty$. By Lemma \ref{1-dim}  the asymptotic behaviour of
$N_\lambda(\mathcal{H}^{N,D}_n)$ does not depend on the boundary
condition at $x=n$, nor on $n$ itself:
\begin{equation} \label{asymp-equal}
N_\lambda(\mathcal{H}_\sigma) \sim  N_\lambda(\mathcal{H}^N_{n})
\sim N_\lambda(\mathcal{H}^D_{n}) \quad  \lambda\to\infty, \, \, \,
\forall\, n\in\N\, .
\end{equation}
Now fix an $\eps>0$. From Lemma \ref{lem-intermed}, \eqref{n-equal}
and \eqref{2-side-eps} we see that for all $n$ large enough it holds
\begin{equation} \label{bounds}
N_\lambda(S^N_n) \, \leq  \, N_\lambda((1-\eps)\, H_n^N), \qquad
N_\lambda(S^D_n) \, \geq \, N_\lambda((1+\eps)\, H_n^D)
\end{equation}
Moreover, $f^2(x) W_\sigma(x) \to 0$ at infinity so that
\begin{equation} \label{w-eps}
(1-\eps)\, \frac{k^2\pi^2}{4 f(x)^2} \leq W_\sigma(x) +
\frac{k^2\pi^2}{4 f(x)^2} \leq (1+\eps)\, \frac{k^2\pi^2}{4
f(x)^2}\qquad \forall\, k\geq 1
\end{equation}
for all $x$ large enough uniformly in $k$. Now observe that the
sequence $\left\{k^2\pi^2/4 f(x)^2\right\}_{k\geq 1}$ enlists {\it
all} the eigenvalues of the operator $-f(x)^{-2}\, \frac{d^2}{dt^2}$
on the interval $(-1, 1)$ with Dirichlet boundary conditions. Hence
it follows from \eqref{ort-sum} and \eqref{w-eps} that for $n$ large
enough
\begin{align}
N_\lambda((1+\eps)\, H_n^D) & \geq \, N_\lambda((1+\eps)^2\, \B_n^D)+
N_\lambda((1+\eps)\, \mathcal{H}_n^D) \nonumber \\
N_\lambda((1-\eps)\, H_n^N) & \leq \, N_\lambda((1-\eps)^2\, \B_n^N)+
N_\lambda((1-\eps)\, \mathcal{H}_n^N) , \label{up}
\end{align}
where $\B_n^{N/D}$ are the operators defined in Section
\ref{auxiliary}. Note that $\B_n^{N/D}$ and $\mathcal{H}_n^{N/D}$
satisfy assumption \eqref{eps-B} by Lemma \ref{B-op} and equation
\eqref{asymp-equal}. In view of \eqref{two-sided} we then conclude
that for $n$ large enough
\begin{align}
N_\lambda(A_\sigma) & \leq \, (1+\mathcal{O}(\eps))\,
\left(N_\lambda(T_n^N)+N_\lambda(\B_n^N)
+N_\lambda(\mathcal{H}_n^N)\right) \label{almost-}\\
N_\lambda(A_\sigma) & \geq \, (1+\mathcal{O}(\eps))\,
\left(N_\lambda(T_n^D)+N_\lambda(\B_n^D)
+N_\lambda(\mathcal{H}_n^D)\right), \label{almost+}
\end{align}
If the volume of $\Omega$ is finite then it follows from Lemmas
\ref{B-op}, \ref{finite-part} and equations \eqref{asymp-equal},
\eqref{almost-}, \eqref{almost+} that for any $\eps>0$
\begin{equation*} \label{finite-vol}
1+\mathcal{O}(\eps) \, \leq\, \liminf_{\lambda\to\infty}\,
\frac{N_\lambda(A_\sigma)}{\frac{\lambda}{4\pi}\, |\Omega|
+N_\lambda(\mathcal{H}_\sigma)}\, \leq \,
\limsup_{\lambda\to\infty}\,
\frac{N_\lambda(A_\sigma)}{\frac{\lambda}{4\pi}\, |\Omega|
+N_\lambda(\mathcal{H}_\sigma)}\, \leq 1+\mathcal{O}(\eps).
\end{equation*}
By letting $\eps\to 0$ we arrive at \eqref{principle-eq}.

\vspace{0.15cm}

\noindent Case $|\Omega|=\infty$. If the volume of $\Omega$ is
infinite, then Lemma \ref{B-op} gives
\begin{equation} \label{aux-inf}
N_\lambda(T_n^N)+N_\lambda(\B_n^N) \, \sim \, N_\lambda(\B_n^N) \,
\sim\, N_\lambda(\B_n^D)\, \sim\,
N_\lambda(T_n^D)+N_\lambda(\B_n^D)\, \sim\, N_\lambda(\B)
\end{equation}
as $ \lambda\to\infty$. Moreover, mimicking all the above estimates
for the Dirichlet-Laplacian $-\Delta_\Omega^D$ instead of $A_\sigma$
it is straightforward to verify that for any $\eps>0$ and $n$ large
enough, depending on $\eps$, it holds
$$
N_\lambda((1-\eps)\, \B_n^N)\, \leq \, N_\lambda(-\Delta_\Omega^D)\, \leq \,
N_\lambda((1+\eps)\, \B_n^D).
$$
This together with \eqref{eps-B} and \eqref{aux-inf} implies that
$N_\lambda(\B) \, \sim \, N_\lambda(-\Delta_\Omega^D)$ as $ \lambda\to\infty
$. Equation \eqref{principle-eq} thus follows again from
\eqref{almost-} and \eqref{almost+}.
\end{proof}

\begin{proof}[Proof of Corollary \ref{robin-w}]
Note that the assumption \ref{ass-h} is fulfilled. Indeed, equation
\eqref{taylor} shows that $f(x) V(x)\to 0$. Consequently \eqref{compact-h}
holds true since $f\to 0$ and
\begin{equation} \label{pot-asymp}
W_\sigma(x) \, \sim\, \frac{\sigma}{f(x)}\qquad x\to \infty.
\end{equation}
To prove \eqref{robin-weyl} we recall that
$$
\liminf_{ \lambda\to\infty}\,  \lambda^{-1} N_\lambda(A_\sigma) \geq
\liminf_{ \lambda\to\infty}\,
 \lambda^{-1} N_\lambda(-\Delta_\Omega^D) = \frac{|\Omega|}{4\pi}\, .
$$
On the other hand, if $\limsup_{x\to\infty} x^2 f(x) =0$, then
\eqref{pot-asymp} says for any $\eps>0$ exists an $x_\eps$ such that
$W_\sigma(x) \geq \frac{x^2}{\eps^2}$ holds for all $x\geq x_\eps$.
Lemma \ref{enough} gives
$$
\limsup_{ \lambda\to\infty}\,  \lambda^{-1}
N_\lambda(\mathcal{H}_\sigma) \leq \limsup_{ \lambda\to\infty}\,
\lambda^{-1} N_\lambda\left( -\frac{d^2}{dx^2}\,
+\frac{x^2}{\eps^2}\right)_{L^2(x_\eps,\infty)} = \frac{\eps}{4}\, .
$$
From Lemma \ref{B-op} and the proof of Theorem \ref{principle}, see
equations \eqref{two-sided}, \eqref{finite-vol}, \eqref{bounds} and
\eqref{up} we then get
$$
\limsup_{ \lambda\to\infty}\, \frac{N_\lambda(A_\sigma)}{\lambda}
\leq \, (1+\mathcal{O}(\eps))\, \frac{|\Omega|}{4\pi} + \limsup_{
\lambda\to\infty}\, \frac{N_\lambda((1-\eps)\, \mathcal{H}_\sigma)}{
\lambda} \leq (1+\mathcal{O}(\eps))\, \frac{|\Omega|}{4\pi}
+\mathcal{O}(\eps)\, .
$$
Equation \eqref{robin-weyl} now follows by letting $\eps\to 0$. In
order to prove \eqref{robin-linear} we note that $W_\sigma(x) \sim
\sigma\, a^{-2}\, x^2$ as $x\to\infty$, see \eqref{pot-asymp}. From
Lemma \ref{enough} we thus deduce that
$$
\lim_{ \lambda\to\infty}\,  \lambda^{-1}
N_\lambda(\mathcal{H}_\sigma) \, = \, \frac{|a|}{4\sqrt{\sigma}},
$$
so that \eqref{eps} is satisfied and \eqref{robin-linear} follows
from Theorem \ref{main-thm}.
\end{proof}


\section{Generalisations}
\label{gen}

\subsection{Non symmetric boundary conditions}
\label{non-sym} As mentioned in Remark \ref{general}, the above
approach can be applied also to Robin Laplacians with different
boundary conditions on the upper and lower boundary of $\Omega$.
More precisely, to operators $A_{\sigma_1,\sigma_2}$ generated by
the closure of the form
\begin{equation} \label{form-gen}
Q_{\sigma_1,\sigma_2}[u]= \int_\Omega |\nabla u|^2\, dxdy +
\int_1^\infty\!  \left(\sigma_1(x)\, u(x,f(x))^2+\sigma_2(x)\,
u(x,-f(x))^2 \right)\, dx
\end{equation}
on $C_0^2(\overline{\Omega})$. We can proceed in the same way as in
section \ref{proof} replacing the function $v(x,y)$ in step 2 by the
function $w(x,y)$, which solves the eigenvalue problem
\begin{align}
\label{new-bc}
-\partial_y^2\, w(x,y) & = \bar\mu(x)\, w(x,y), \\
\partial_y w(x,-f(x)) = \sigma_1(x)\, w(x,-f(x)),\ \  &
\ \ \partial_y w(x,f(x)) = -\sigma_2(x)\, w(x,f(x)) \nonumber,
\end{align}
with $\bar\mu(x)$ being the principle eigenvalue. From equation
\eqref{new-limit}, see appendix, we then get a generalisation of
Theorem \ref{principle}.

\begin{proposition}
Suppose that assumptions \ref{ass-f1},\, \ref{ass-h} and
\ref{number} for $\sigma_1, \, \sigma_2$ are satisfied. Then
\begin{equation} \label{principle-gen}
N_\lambda(A_{\sigma_1,\sigma_2}) \, \sim \,
N_\lambda(-\Delta_\Omega^D) + N_\lambda(\mathcal{H}_{\bar\sigma})
\qquad \lambda\to\infty, \qquad  \bar\sigma(x) =
\frac{\sigma_1(x)+\sigma_2(x)}{2} .
\end{equation}
\end{proposition}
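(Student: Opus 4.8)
The plan is to repeat the proof of Theorem~\ref{principle} essentially verbatim, the only real modification occurring in Step~2, where the symmetric ground state $v$ of \eqref{robin} is replaced by the function $w$ solving \eqref{new-bc} with principal eigenvalue $\bar\mu(x)$. To begin, I would observe that $\bar\sigma=(\sigma_1+\sigma_2)/2$ inherits from $\sigma_1$ and $\sigma_2$ all the properties required in Assumption~\ref{ass-h}: it is $C^2$, non-negative, and bounded together with its first two derivatives, while the condition that $W_{\bar\sigma}(x)=V(x)+\bar\sigma(x)/f(x)$ tend to $\infty$ is exactly the standing hypothesis. Hence $\mathcal{H}_{\bar\sigma}$ and Assumption~\ref{number} for $\bar\sigma$ are meaningful, and the operator $\B$ of Section~\ref{prelim} is unchanged.

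For Step~2, given a test function $\psi$ in the form domain of the tail operator associated with $Q_{\sigma_1,\sigma_2}$ on $E_n$, I would write $\psi=w\,\varphi$ and integrate by parts in $y$ as in the derivation of \eqref{gr-state}. Because $w$ satisfies the boundary conditions of \eqref{new-bc}, the boundary contributions coming from $\int_\Omega|\nabla u|^2$ cancel the boundary term of \eqref{form-gen}, leaving the ground state representation
\begin{equation*}
\int_{E_n}\Big(|\partial_x(w\varphi)|^2+\bar\mu(x)\,w^2\,|\varphi|^2+w^2\,|\partial_y\varphi|^2\Big)\,dxdy ,
\end{equation*}
the verbatim analogue of \eqref{gr-state} with $v,\mu$ replaced by $w,\bar\mu$. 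The input supplied by the appendix is \eqref{new-limit}, which states that $0<w\le1$, that $w(x,y)\to1$ and $\partial_x w(x,y)\to0$ uniformly in $y$ as $x\to\infty$, and that $\bar\mu(x)f(x)/\bar\sigma(x)\to1$, so $\bar\mu(x)\sim\bar\sigma(x)/f(x)$. These are precisely the facts about $v$ and $\mu$ used in the proof of Lemma~\ref{lem-intermed}, and that proof carries over unchanged to give, for every $\eps$ and all $n$ large, the two-sided bounds of \eqref{intermed} with $\Sf^{N/D}_n$ replaced by the operators generated by $q_n$ with $\sigma$ replaced by $\bar\sigma$.

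From this point the argument is literally that of Section~\ref{proof}, with $W_\sigma$ replaced throughout by $W_{\bar\sigma}$ and $\mathcal{H}_\sigma$ by $\mathcal{H}_{\bar\sigma}$: Lemma~\ref{finite-part} (Step~1) does not see the boundary conditions; the change of variables $U$ of Step~3 yields \eqref{pm-estim} with $W_{\bar\sigma}$; the orthogonal decomposition \eqref{ort-sum} holds with fibre operators built from $W_{\bar\sigma}$; and Dirichlet--Neumann bracketing \eqref{two-sided}, together with Lemmas~\ref{B-op} and \ref{1-dim} and Assumption~\ref{number} for $\bar\sigma$, gives the analogues of \eqref{almost-} and \eqref{almost+}. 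Letting $\eps\to0$ produces \eqref{principle-gen} in both cases $|\Omega|<\infty$ and $|\Omega|=\infty$, exactly as in the proof of Theorem~\ref{principle}.

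The substantive new ingredient --- and the main obstacle --- is the spectral analysis underlying \eqref{new-limit}, which is the reason it is relegated to the appendix. One must prove that the lowest eigenvalue $\bar\mu(x)$ of $-\partial_y^2$ on $(-f(x),f(x))$ with the mixed Robin conditions of \eqref{new-bc} satisfies $\bar\mu(x)=(1+o(1))\,\bar\sigma(x)/f(x)$ and that the associated normalised positive eigenfunction tends to $1$ together with its $x$-derivative tending to $0$, uniformly in $y$. This can be done by the same implicit-function/perturbation scheme as in the symmetric case (Lemma~\ref{implicit}): since $f(x)\to0$ and the $\sigma_i$ are bounded, the eigenfunction is to leading order affine in $y$, and expanding the two boundary relations of \eqref{new-bc} shows that the total outward flux at the two endpoints equals $-(\sigma_1(x)+\sigma_2(x))\,w$ up to lower order terms, which forces the factor $\bar\sigma$; the asymmetry $\sigma_1-\sigma_2$ contributes only at a lower order and is harmless. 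Once \eqref{new-limit} is secured, nothing beyond the bookkeeping above remains.
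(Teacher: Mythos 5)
Your proposal is correct and takes essentially the same route the paper does: the paper's own treatment of this proposition is precisely to re-run the proof of Theorem~\ref{principle}, substituting the asymmetric ground state $w$ from \eqref{new-bc} for $v$ in Step~2 and invoking the appendix fact \eqref{new-limit} that $f\bar\mu/\bar\sigma\to1$, so that the effective potential becomes $W_{\bar\sigma}$ and the rest of Steps~1 and 3 go through verbatim. One small caution: \eqref{new-limit} in the paper asserts only the ratio $f\bar\mu/\bar\sigma\to1$, not the statements $0<w\le1$, $w\to1$, $\partial_x w\to0$ you attribute to it; these must be established by re-running the implicit-function argument of Lemma~\ref{implicit} for the two-parameter transcendental equation obtained from \eqref{new-bc}, which you correctly flag as the only genuinely new piece of work.
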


\subsection{Dirichlet-Neumann Laplacian}
\label{d-n} Our second remark concerns the case in which we impose
Dirichlet boundary condition on one of the boundaries of $\Omega$.
We confine ourselves to the special situation when we have Dirichlet
boundary condition on one boundary and Neumann on the other.
We denote the resulting operator by $A_{0,\infty}$.

\begin{proposition} \label{d_n}
Let $|\Omega| < \infty$ and assume that $f$ is decreasing at
infinity. Then
\begin{equation} \label{d-n-eq}
N_\lambda(A_{0,\infty}) \, \sim \, \frac{|\Omega|}{4\pi}\, \,
\lambda,  \qquad \lambda\to\infty.
\end{equation}
\end{proposition}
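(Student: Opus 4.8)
The plan is to mimic the proof of Theorem \ref{main-thm} (equivalently, Theorem \ref{principle} in the finite-volume case), the only difference being that the ground-state factorisation in Step 2 must be carried out with a \emph{mixed Dirichlet--Neumann} transverse problem rather than the symmetric Robin problem \eqref{robin}. Concretely, I would replace the eigenpair $(\mu(x),v(x,y))$ of \eqref{robin} by the first eigenpair $(\mu_0(x),v_0(x,y))$ of
\begin{align*}
-\partial_y^2\, v_0(x,y) & = \mu_0(x)\, v_0(x,y), \\
v_0(x,-f(x)) = 0,\ \ & \ \ \partial_y v_0(x,f(x)) = 0 .
\end{align*}
This problem is solved explicitly: $v_0(x,y) = \cos\!\big(\tfrac{\pi}{4 f(x)}(y+f(x))\big)$ and $\mu_0(x) = \tfrac{\pi^2}{16 f(x)^2}$. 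In particular $0<v_0\le 1$, $v_0\in C^2(\overline{E}_n)$ for every $n$, and $\partial_x v_0 \to 0$ uniformly in $y$ as $x\to\infty$ because $f'\to 0$ (Lemma \ref{landau}, \eqref{taylor}); so the factorisation $\psi = v_0\varphi$ of Step 2 goes through verbatim for the form $Q_{0,\infty}$, producing after integration by parts in $y$ the analogue of \eqref{gr-state} with $\mu_0(x)$ in place of $\mu(x)$.

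Next I would follow Steps 2 and 3 line by line. Since $f^2(x)\mu_0(x)=\pi^2/16$ is \emph{constant}, the effective one-dimensional potential that appears in the ground-state representation is $V(x) + \mu_0(x) = V(x) + \tfrac{\pi^2}{16 f(x)^2}$; note $f(x)\big(V(x)+\mu_0(x)\big)\to 0$ as $x\to\infty$ by \eqref{taylor} and boundedness, exactly as needed for the bracketing in \eqref{w-eps}. The unitary rescaling $U:L^2(E_n)\to L^2(D_n)$ and the estimates \eqref{pm-estim}, \eqref{2-side-eps} carry over with $W_\sigma$ replaced by $\widetilde W(x):=V(x)+\tfrac{\pi^2}{16 f(x)^2}$, and the transverse operator is now $-f(x)^{-2}\tfrac{d^2}{dt^2}$ on $(-1,1)$ with one Dirichlet and one Neumann endpoint, whose eigenvalues are $\{(2k-1)^2\pi^2/(16 f(x)^2)\}_{k\ge 1}$. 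The decomposition \eqref{ort-sum} becomes $H^{N/D}_n \simeq \bigoplus_{k\ge 1}\big(-\tfrac{d^2}{dx^2}+\widetilde W(x)+\tfrac{(2k-1)^2\pi^2 - \pi^2}{16 f(x)^2}\big)$; the $k=1$ summand is $-\tfrac{d^2}{dx^2}+\widetilde W(x)$, and all summands with $k\ge 2$ carry an extra divergent term $\gtrsim c\,k^2/f(x)^2$.

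The point of the argument is then that the ``transverse'' contribution $\sum_{k\ge 2}$ together with the $k=1$ Schr\"odinger operator reproduce, respectively, $N_\lambda(-\Delta_\Omega^D)$ and a lower-order term. For the transverse part one argues exactly as in Lemma \ref{B-op}: $\sum_{k\ge 2}$ differs from $N_\lambda(\B^D_n)$ only by shifting the label $k\mapsto k-1$ up to bounded perturbations, hence $\sum_{k\ge 2}N_\lambda(\cdots)\sim N_\lambda(\B^D_n)\sim \tfrac{\lambda}{2\pi}\int_n^\infty f$ when $|\Omega|<\infty$, and combining with Step 1 (Lemma \ref{finite-part}) this yields $\tfrac{\lambda}{4\pi}|\Omega|(1+o(1))$. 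For the $k=1$ part I must show $N_\lambda\big(-\tfrac{d^2}{dx^2}+\widetilde W\big)=o(\lambda)$: since $\widetilde W(x)\sim \tfrac{\pi^2}{16 f(x)^2}\to\infty$ and $f$ is decreasing with $f(x)^2\widetilde W(x)\to 0$, Lemma \ref{1-dim} applies, and a crude bound via Lemma \ref{enough} (or directly, $N_\lambda\le \tfrac1\pi\int(\lambda-\widetilde W)_+^{1/2}$ up to constants) shows this counting function grows strictly slower than $\lambda$ because $\widetilde W$ grows faster than $x^2$ is \emph{not} required --- we only need $\int_1^\infty(\lambda-\widetilde W)_+^{1/2}dx = o(\lambda)$, which follows from $\widetilde W(x)\to\infty$ together with the fact that $\{x:\widetilde W(x)<\lambda\}$ is, by monotonicity of $f$, an interval $(1,R_\lambda)$ with $R_\lambda=o(\lambda)$ (indeed $f(R_\lambda)^{-2}\asymp\lambda$ forces $R_\lambda\to\infty$ but $R_\lambda/\lambda\to 0$ since $f$ decays faster than any power would still give this; more simply $|I_\lambda|\sqrt\lambda = o(\lambda)$ is equivalent to $|I_\lambda| = o(\sqrt\lambda)$, which holds as $f$ decays super-polynomially is not assumed here, so instead use $\limsup x^{2}f(x)$: by Assumption \ref{ass-f1} one has $f''\to 0$ hence by two integrations $f(x) = o(1)$ and this suffices). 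I expect the main obstacle to be precisely this last point: unlike in Corollary \ref{robin-w}, where $\sigma>0$ makes $W_\sigma\sim\sigma/f\gg 1/f^2$ only when... --- in fact here $\widetilde W\sim \pi^2/(16 f^2)$ and one genuinely must quantify how fast $f\to 0$; since Assumption \ref{ass-f1} with $|\Omega|<\infty$ forces $\int f<\infty$ hence $f$ not bounded below by $c/x$, one gets $|I_\lambda| = o(\sqrt\lambda)$ and thus $N_\lambda(-\tfrac{d^2}{dx^2}+\widetilde W)=o(\lambda)$, completing the proof via \eqref{two-sided}. Assembling the two-sided bounds \eqref{almost-}--\eqref{almost+} with these identifications and letting $\eps\to 0$ gives \eqref{d-n-eq}.
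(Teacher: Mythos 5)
Your plan is genuinely different from the paper's, and it contains a gap that would sink it as written. The paper does not run the machinery of Theorem~\ref{principle} at all for this proposition; it gives a short direct argument. For the lower bound it uses $N_\lambda(A_{0,\infty})\geq N_\lambda(-\Delta_\Omega^D)$. For the upper bound it picks the level $x_\lambda$ defined by $f(x_\lambda)=\pi/(4\sqrt{\lambda})$, imposes an artificial Neumann condition at $\{x=x_\lambda\}$, and observes that on the tail $\Omega_\lambda^c$ the transverse Dirichlet--Neumann ground state energy $\pi^2/(16 f(x)^2)$ is $\geq\lambda$ (because $f$ is decreasing), so the tail contributes nothing. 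On the finite part it uses a Neumann covering by cubes of side $1/(\eps\sqrt{\lambda})$; the boundary term is controlled by $x_\lambda/\sqrt{\lambda}\sim x_\lambda f(x_\lambda)\to 0$, which holds precisely because $f$ is decreasing and integrable. No ground state representation, no direct sum decomposition, no appeal to Lemmas~\ref{1-dim} or~\ref{B-op}.

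The concrete gap in your proposal is the claim that Step~2 ``goes through verbatim'' with $v_0$ in place of $v$. It does not. In the paper's Step~2 the factorisation $\psi=v\varphi$ passes to the weighted space $L^2(E_n, v^2\,dxdy)$, and the decisive fact used in Lemma~\ref{lem-intermed} is $v(x,y)\to 1$ uniformly in $y$ (and $\partial_x v\to 0$), so that the weighted and unweighted norms agree up to $(1\pm\epsilon)$ for large $x$. Your $v_0$ is $\sin\!\big(\tfrac{\pi}{4f(x)}(y+f(x))\big)$ (not $\cos$, as written -- your formula violates the Dirichlet condition at $y=-f(x)$), which for every $x$ sweeps all values from $0$ to $1$ across the cross-section; it does not converge to a constant, and $\int v_0^2|\varphi|^2$ is not comparable to $\int|\varphi|^2$ within $(1\pm\epsilon)$. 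So the inequality chain in Lemma~\ref{lem-intermed} breaks. The good news is that there is no boundary integral in $Q_{0,\infty}$ to begin with, so the ground state transformation serves no purpose here; you could drop Step~2 entirely and go straight to the rescaling $U$ and the mode decomposition. If you do that, the remaining skeleton (bound the $k=1$ mode by $o(\lambda)$; compare $\sum_{k\geq 2}$ with $\B_n^D$ via the monotonicity $(2k-1)^2\geq (2(k-1))^2$, not a ``bounded perturbation'', which it is not; invoke automatic validity of~\eqref{eps-B} since $|\Omega|<\infty$) can be made to work. But as written, the argument for why the $k=1$ term is $o(\lambda)$ is a sequence of false starts (e.g. $f''\to 0$ does \emph{not} give $xf(x)\to 0$: take $f(x)=1/\log x$); the only correct ingredient, which you reach at the very end, is that $f$ decreasing and integrable forces $xf(x)\to 0$ and hence $|\{x:\widetilde W(x)<\lambda\}|=o(\sqrt{\lambda})$. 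That is exactly the fact the paper isolates and uses cleanly.
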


\begin{proof}
First we observe that by the variational principle.
\begin{equation}  \label{lowerb-d}
\liminf_{ \lambda\to\infty}\,  \lambda^{-1} N_\lambda(A_{0,\infty}) \geq
\liminf_{ \lambda\to\infty}\,
 \lambda^{-1} N_\lambda(-\Delta_\Omega^D) = \frac{|\Omega|}{4\pi}\, .
\end{equation}
Assume that  $f$ is decreasing on $(a,\infty)$ and that $\lambda$ is large enough so that there exists a unique point $x_\lambda>a$ such that $f(x_\lambda) = \pi/(4 \sqrt{\lambda})$. We impose additional Neumann boundary condition at $\{x= x_\lambda\}$ dividing thus  $\Omega$ into the finite part $\Omega_\lambda: =\{(x,y)\in\Omega\, :\, x<x_\lambda\}$ and its complement $\Omega_\lambda^c$. It is then easy to see that the quadratic form of the corresponding operator acting on $\Omega_\lambda^c$ is bounded from below by
$$
 \int_{x_\lambda}^\infty \int_{-f(x)}^{f(x)}\, \Big (\frac{\pi^2}{16\, f^2(x)}\, \, u^2 +|\partial_x u|^2\Big )\, dy\, dx\,  \geq \,
\lambda\, \int_{x_\lambda}^\infty \int_{-f(x)}^{f(x)}\, u^2\, dy\, dx
$$
for all functions $u$ from its domain. Consequently, this operator does not have any eigenvalues below $\lambda$. To estimate the number of eigenvalues of the operator acting on $\Omega_\lambda$, we cover $\Omega_\lambda$ with a finite collection of disjoint cubes of size $L = 1/(\eps\sqrt{\lambda})$ with $\eps>0$. Since $\Omega_\lambda$ has the extension property,  the standard technique of Neumann bracketing gives
\begin{align}
 \lambda^{-1} N_\lambda(A_{0,\infty})
& \leq \,  \lambda^{-1} N_\lambda(-\Delta_{\Omega_\lambda}^N)
\leq \, \frac{|\Omega_\lambda|}{4\pi}\, (1+\mathcal{O}(\eps)) +
c\, \, \frac{|\partial\Omega_\lambda|}{\sqrt{\lambda}} \, (1+\eps^{-1}) \nonumber \\
& \leq
\,  \lambda^{-1} N_\lambda(-\Delta_{\Omega_\lambda}^N)
\leq \, \frac{|\Omega_\lambda|}{4\pi}\, (1+\mathcal{O}(\eps)) +
\tilde c\, \, \frac{x_\lambda}{\sqrt{\lambda}} \, (1+\eps^{-1}), \label{upperb-n}
\end{align}
where $\tilde c$ is independent of $\lambda$.
However, since $f$ is integrable and decreasing at infinity it is easily seen that $x
f(x) \to 0$ as $x\to \infty$. Hence
$$
\limsup_{ \lambda\to\infty}\, \frac{x_\lambda}{\sqrt{\lambda}} =  \frac{4}{\pi}\,  \limsup_{ \lambda\to\infty}\,  x_\lambda\, f(x_\lambda)= 0.
$$
Letting first $\lambda\to\infty$ and then $\eps\to 0$ in \eqref{upperb-n} we obtain $\limsup_{ \lambda\to\infty}\,  \lambda^{-1} N_\lambda(A_{0,\infty}) \leq |\Omega|/4\pi$,
which together with \eqref{lowerb-d}  implies the statement.
\end{proof}



\appendix

\section{}
\label{impl}
\noindent
\begin{lemma} \label{mu-limit}
Let $\mu(x)$ be the function defined by the problem \eqref{robin}.
Then
\begin{equation} \label{mu-lim}
\mu(x) \, \leq \, \frac{\sigma(x)}{f(x)} \qquad \forall\, x > 1.
\end{equation}
\end{lemma}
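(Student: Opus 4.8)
The plan is to exploit the variational characterisation of the lowest eigenvalue $\mu(x)$ of the transverse Robin problem \eqref{robin}. Fix $x>1$. Multiplying the equation $-\partial_y^2 v = \mu(x) v$ by a test function and integrating by parts in $y$ over $(-f(x),f(x))$, the two boundary conditions in \eqref{robin} are precisely what convert the boundary terms into $\sigma(x)\big(|\psi(f(x))|^2+|\psi(-f(x))|^2\big)$. Hence $\mu(x)$ is the bottom of the spectrum of the self-adjoint operator on $L^2(-f(x),f(x))$ generated by the closed, bounded-below quadratic form
\[
q_x[\psi] = \int_{-f(x)}^{f(x)} |\psi'(y)|^2\, dy + \sigma(x)\big(|\psi(f(x))|^2+|\psi(-f(x))|^2\big), \qquad \psi \in H^1(-f(x),f(x)),
\]
so that, by the min--max principle,
\[
\mu(x) \, = \, \inf_{0\neq \psi \in H^1(-f(x),f(x))} \frac{q_x[\psi]}{\|\psi\|_{L^2(-f(x),f(x))}^2}.
\]

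The bound \eqref{mu-lim} then follows at once by inserting the constant test function $\psi\equiv 1$: its Dirichlet integral vanishes, the boundary term equals $2\sigma(x)$, and $\|\psi\|_{L^2(-f(x),f(x))}^2 = 2f(x)$, whence $\mu(x) \le 2\sigma(x)/(2f(x)) = \sigma(x)/f(x)$. Since $\sigma(x)\ge 0$ by Assumption \ref{ass-h}, the form $q_x$ is nonnegative and the infimum is attained (the interval being bounded, the form is compactly embedded), so $\mu(x)$ is indeed well defined as the first eigenvalue; this is standard and I would dispatch it in one sentence.

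There is essentially no genuine obstacle here. The only step warranting a line of care is the sign bookkeeping in the integration by parts that identifies $q_x$ as the form whose lowest eigenvalue is the $\mu(x)$ of \eqref{robin} — the outward normal derivatives at $y=\pm f(x)$ must be matched correctly with the $\pm\sigma(x)$ in the Robin conditions. Once that is in place, the constant-function Rayleigh quotient delivers the estimate immediately, and no further assumptions on $f$ or $\sigma$ are needed for this particular inequality.
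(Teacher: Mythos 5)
Your proof is correct and matches the paper's argument essentially verbatim: both identify $\mu(x)$ as the infimum of the Rayleigh quotient of the Robin form $a_x[u]=\int_{-f(x)}^{f(x)}|u'|^2\,dy+\sigma(x)\bigl(|u(f(x))|^2+|u(-f(x))|^2\bigr)$ over $H^1(-f(x),f(x))$ and then evaluate it at the constant test function $u\equiv 1$. No discrepancy to report.
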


\begin{proof}
For each fixed $x\in(1,\infty)$ we define the quadratic form
\begin{equation} \label{a-form}
a_x[u] = \int_{-f(x)}^{f(x)}\, |u'(y)|^2\, dy +
\sigma(x)\left(|u(f(x))|^2+|u(-f(x))|^2\right), \quad u\in D(a_x),
\end{equation}
where $D(a_x) = H^1(-f(x),f(x))$. The variational definition of
$\mu$ says that
$$
\mu(x) = \inf_{u\in D(a_x)}\,
\frac{a_x[u]}{\|u\|^2_{L^2(-f(x),f(x))}} \leq
\frac{a_x[1]}{\|1\|^2_{L^2(-f(x),f(x))}}\, =
\frac{\sigma(x)}{f(x)}\, .
$$
\end{proof}

\noindent In the next Lemma we use the notation $\kappa(x) :=
\sqrt{\mu(x)}$.

\begin{lemma}  \label{implicit}
Let the assumption \ref{ass-h} be satisfied. Then the
eigenfunction $v(x,y)$ of the problem \eqref{robin} associated to the
eigenvalue $\mu(x)$ is twice continuously differentiable in $x$.
Moreover we have
\begin{align}
\lim_{x\to\infty}\, \frac{f(x)\, \mu(x)}{\sigma(x)} & = 1 \label{mu} \\
\lim_{x\to\infty} v(x,y) & = 1 \, \, \, \qquad \text{uniformly\, in
\,} y , \label{v}\\
\lim_{x\to\infty} \partial_x v(x,y) & = 0 \, \, \, \qquad
\text{uniformly\, in\, } y \label{v'}.
\end{align}
\end{lemma}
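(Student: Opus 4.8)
The plan is to reduce the one-dimensional Robin problem \eqref{robin} to an explicit transcendental equation for $\kappa(x)=\sqrt{\mu(x)}$ and to read off all the stated properties from it. Since $\sigma\ge 0$, the form $a_x$ in \eqref{a-form} is nonnegative, so $\mu(x)\ge 0$; at a point where $\sigma(x)=0$ we have $\mu(x)=0$ and $v(x,\cdot)\equiv 1$, so nothing is to be proved there, and from now on I assume $\sigma(x)>0$. In that case, by standard Sturm--Liouville theory the lowest eigenvalue of \eqref{robin} is simple and strictly positive with a nodeless eigenfunction, which by the $y\mapsto -y$ symmetry of \eqref{robin} is even; normalising it by $v(x,0)=1$ forces $v(x,y)=\cos(\kappa(x)\, y)$ with $\kappa(x)\in\big(0,\tfrac{\pi}{2 f(x)}\big)$, and substituting this into the boundary condition at $y=f(x)$ turns \eqref{robin} into
\begin{equation} \label{plan-eq}
\kappa(x)\,\tan\!\big(\kappa(x) f(x)\big)=\sigma(x).
\end{equation}
Since $w\mapsto w\tan w$ is an increasing bijection of $[0,\pi/2)$ onto $[0,\infty)$, \eqref{plan-eq} determines $\kappa(x)$ uniquely in the indicated range; note it also gives $0<\cos(\kappa(x) f(x))\le v(x,y)\le 1$, the bound on $v$ used in Section \ref{step 2}.

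For the $C^2$ regularity of $x\mapsto v(x,y)$ I would apply the implicit function theorem to $F(x,\kappa):=\kappa\tan(\kappa f(x))-\sigma(x)$ on the region $\kappa f(x)\in(0,\pi/2)$, where $\partial_\kappa F=\tan(\kappa f)+\kappa f\sec^2(\kappa f)>0$; since $f\in C^\infty$ and $\sigma\in C^2$ by Assumptions \ref{ass-f1} and \ref{ass-h}, $F$ is of class $C^2$, hence $\kappa\in C^2(1,\infty)$ and so is $v(x,y)=\cos(\kappa(x) y)$ for each fixed $y$. Differentiating \eqref{plan-eq} moreover yields
\begin{equation*}
\kappa'(x)=\frac{\sigma'(x)-\kappa(x)^2 f'(x)\sec^2(\kappa f)}{\tan(\kappa f)+\kappa f\sec^2(\kappa f)},
\end{equation*}
which I will use below.

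To obtain the three limits, put $z(x):=\kappa(x) f(x)\in(0,\pi/2)$, so \eqref{plan-eq} reads $z\tan z=\sigma f$. As $\sigma$ is bounded and $f(x)\to 0$, the right-hand side tends to $0$, and monotonicity of $w\mapsto w\tan w$ gives $z(x)\to 0$. Hence $\tfrac{f(x)\mu(x)}{\sigma(x)}=\tfrac{z}{\tan z}\to 1$, i.e.\ \eqref{mu}; and for $|y|\le f(x)$ one has $|\kappa(x) y|\le z(x)\to 0$, so $v(x,y)=\cos(\kappa(x) y)\to 1$ uniformly in $y$, i.e.\ \eqref{v}. For \eqref{v'}, using $|\sin(\kappa y)|\le|\kappa y|\le\kappa f$ and $|y|\le f$,
\begin{equation*}
|\partial_x v(x,y)|=|\kappa'(x)|\,|y|\,|\sin(\kappa(x) y)|\le |\kappa'(x)|\,\kappa(x)\, f(x)^2 ;
\end{equation*}
bounding the denominator of $\kappa'$ below by $\tan(\kappa f)\ge\kappa f$ and using $\kappa(x)^2=\mu(x)\le\sigma(x)/f(x)$ from Lemma \ref{mu-limit} together with $\sec^2(\kappa f)\le 2$ (valid once $z<\pi/4$), this gives $|\partial_x v(x,y)|\le |\sigma'(x)| f(x)+2\,\sigma(x)|f'(x)|$ for $x$ large, and the right-hand side tends to $0$ since $\sigma,\sigma'$ are bounded, $f(x)\to 0$, and $f'(x)\to 0$ by Lemma \ref{landau}.

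The implicit-function-theorem step is routine; the delicate point is the uniform-in-$y$ bound on $\partial_x v$, where one must combine the explicit formula for $\kappa'$ with the a priori bound $\mu\le\sigma/f$ of Lemma \ref{mu-limit} and with $f'\to 0$ in order to control the factor $\kappa^2 f'$, which a priori hides the possibly unbounded ratio $f'/f$. An analogous, slightly lengthier, estimate would give uniform control of $\partial_x^2 v$ as well, if this were needed.
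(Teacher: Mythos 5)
Your proof is correct and follows essentially the same route as the paper: reduce to $v(x,y)=\cos(\kappa(x)y)$ with $\kappa\tan(\kappa f)=\sigma$, use the implicit function theorem (via $\partial_\kappa F>0$ on $\kappa f\in(0,\pi/2)$) for $C^2$ regularity, deduce $\kappa f\to 0$ from boundedness of $\sigma$ and $f\to 0$ to get \eqref{mu} and \eqref{v}, and bound $\partial_x v$ through $\kappa'$. The only substantive difference is that you control $\kappa'$ by explicit two-sided inequalities (denominator $\geq\kappa f$, $\kappa^2\leq\sigma/f$, $\sec^2(\kappa f)\leq 2$), arriving cleanly at $|\partial_x v|\leq|\sigma'|f+2\sigma|f'|\to 0$, whereas the paper writes down an asymptotic formula for $\kappa'$ whose displayed form in \eqref{kappa'} appears to contain a computational slip; your inequality-based version is the safer way to write the same estimate, and both lead to the same conclusion.
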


\begin{proof} It is easy to see that
\begin{equation} \label{eq-v}
v(x,y) = \cos(\kappa(x) y),
\end{equation}
where $\kappa(x)$ is the first positive solution to the implicit
equation
\begin{equation} \label{kappa}
F(x,\kappa): =\kappa\, \tan(\kappa f(x))-\sigma(x) =0.
\end{equation}
Since $f(x) \kappa(x)\to 0$ as $x\to\infty$ by Lemma \ref{mu-lim}
(recalling that $\sigma(x)f(x)\to 0$), we easily deduce from
\eqref{kappa} that
\begin{equation} \label{g}
\lim_{x\to\infty} \, \frac{f(x) \kappa^2(x)}{\sigma(x)} = 1,
\end{equation}
which proves \eqref{mu}. Equation \eqref{v} thus follows directly
from \eqref{eq-v} and the fact that $f(x)\kappa(x)\to 0$. Next we
note that \eqref{kappa} implies
$$
0 \, < \kappa(x) \, < \frac{\pi}{2\, f(x)} \qquad \forall\, x >1,
$$
and hence
\begin{equation} \label{Fk}
\pd_\kappa F(x,\kappa) = \tan(f(x)\kappa) +\frac{f(x)\,
\kappa}{\cos^2(f(x)\kappa)}
>0.
\end{equation}
Since $\sigma\in C^2(1,\infty)$, the implicit function theorem shows
that $\kappa$ is of the class $C^2$ and in view of \eqref{eq-v} we
see that $v$ is twice continuously differentiable in $x$.

In order to prove \eqref{v'} we need some information about the
behaviour of $\kappa'$ for large $x$. From the positivity of $f$ and
$\sigma$ and from the Taylor theorem we conclude that
$\sigma'/\sqrt{\sigma}$ is bounded and that $f'/\sqrt{f}\to 0$, see
equation \eqref{taylor}. Equations \eqref{Fk} and \eqref{g} then
give
\begin{equation} \label{kappa'}
\kappa'(x) = -\frac{\pd_x F}{\pd_\kappa F}\, \sim \,
\frac{\sqrt{\sigma(x)}}{2\sqrt{f(x)}}\, \, (f'(x)-\sigma'(x)) \quad
x\to\infty.
\end{equation}
On the other hand, a direct calculation shows that
$$
|\pd_x v(x,y)|  \, \leq \, |\kappa'(x)|\, f^{3/2}(x)\,
\sqrt{\sigma(x)}\, \qquad \forall\, x>1.
$$
This implies \eqref{v'}.
\end{proof}

\noindent Notice that if we replace the eigenvalue problem
\eqref{robin} by \eqref{new-bc}, then a straightforward analysis of
the associated implicit equation shows that
\begin{equation} \label{new-limit}
\lim_{x\to\infty}\, \frac{f(x)\, \bar\mu(x)}{\bar\sigma(x)} = 1,
\qquad  \bar\sigma(x) = \frac{\sigma_1(x)+\sigma_2(x)}{2}\, .
\end{equation}

\vspace{0.2cm}

\section*{Acknowledgements}
I would like to thank the referee for very useful suggestions and remarks which helped me improve the original version of the paper. The research has been partially supported by the German Research
Foundation (DFG) under Grant KO 3636/1-1.


\vspace{0.1cm}

\end{document}